\documentclass[12pt]{amsart}

\usepackage[normalem]{ulem} 
\usepackage{soul}     
\usepackage{amsmath,amsthm, amsfonts, amssymb, amscd, mathrsfs}
\usepackage{graphicx, tikz-cd, subfig}
\usepackage{comment, fancyvrb, verbatim, lipsum}
\usepackage{xcolor, physics}
\usepackage{enumitem,multicol}
\usepackage[colorlinks=true, linkcolor=blue, citecolor=blue]{hyperref}
\textwidth=15cm
\setlength{\hoffset}{-0.4in}
\setlength{\parindent}{0in}

\theoremstyle{plain}
\newtheorem{Thm}{Theorem}
\newtheorem*{Thm*}{Theorem}

\newenvironment{Mainthm}[1]{%
  \IfBlankTF{#1}
    {}
    {}%
  \Mainthminner
}{\endMainthminner}

\newenvironment{Manualthm}[1]{%
  \IfBlankTF{#1}
    {}
    {}%
  \Manualthminner
}{\endManualthminner}

\newtheorem{Cor}{Corollary}

\newtheorem{Lem}{Lemma}
\newtheorem{Prop}{Proposition}

\theoremstyle{definition}
\newtheorem{Def}{Definition}
\newtheorem{Rem}{Remark}

\def\R{\mathbb{R}}

\def\S{\mathbb{S}}

\newcommand{\td}{\mathrm{d}}

\numberwithin{equation}{section}

\begin{document}

\title{Isotropic submanifolds of \texorpdfstring{$T\S^\MakeLowercase{n}$}{TS^n} and their focal sets}
\author{Nikos Georgiou}
\address{Nikos Georgiou\\
  Department of Mathematics\\
          South East Technological University\\
          Waterford\\
          Co. Waterford\\
          Ireland.}
\email{nikos.georgiou@setu.ie}

\author{Brendan Guilfoyle}
\address{Brendan Guilfoyle\\
          Faculty of Science and Informatics\\
          Munster Technological University, Kerry\\
          Tralee\\
          Co. Kerry\\
          Ireland.}
\email{brendan.guilfoyle@mtu.ie}

\author{Morgan Robson}
\address{Morgan Robson\\
  Department of Mathematics\\
          South East Technological University\\
          Waterford\\
          Co. Waterford\\
          Ireland.}
\email{morgan.robson@setu.ie}

\keywords{Isotropic Submanifold, Oriented Line, Focal Set, Sectional Curvature}
\subjclass{Primary 53A25; Secondary 53B20}
\date{\today}

\begin{abstract}
Families of oriented lines in $\mathbb{R}^{n+1}$ are studied via their identification with submanifolds of $T\S^n$. In particular, families of oriented lines which are orthogonal to submanifolds in $\mathbb{R}^{n+1}$ are shown to characterise those which are isotropic with respect to the canonical symplectic structure on $T\S^n$.
   
Families of lines that are tangent to a $k$-dimensional submanifold of $\R^{n+1}$ are then studied. For such families, isotropy is shown to be equivalent to the generating vector field being geodesic and hypersurface-orthogonal on the submanifold.
   
The focal set in $\R^{n+1}$ of a family of lines is introduced, extending the classical definition for families normal to hypersurfaces, to general families of lines of arbitrary codimension. A formula is derived that expresses certain sectional curvatures of the focal set in terms of the signed distances between corresponding focal points. 

We then solve an inverse problem for the focal sets of hypersurfaces and show certain sectional and Ricci curvatures of the focal set are determined by the differences between the hypersurface's radii of curvature. This generalises a Theorem of Bianchi from 1874 - namely that surfaces in $\R^3$ of constant astigmatism have pseudo-spherical focal sets.
   
\end{abstract}
\maketitle
\section{Introduction and Statement of Results}
 The tangent bundle $T\S^n$ to the $n$-sphere possesses a well known symplectic structure $\Omega$, defined by pulling back the canonical symplectic structure on $T^\ast\S^n$ under the musical isomorphism $\flat_g:T\S^n\to T^\ast \S^n$, $g$ being the round metric on $\S^n$ \cite{GG22}. Furthermore, $T\S^n$ enjoys a natural identification with  $\mathbb{L}(\mathbb{R}^{n+1})$ - the space of oriented (affine) lines in $\R^{n+1}$. Various features of $\mathbb{L}(\R^{n+1})$ have been profitably studied through this identification \cite{Dun} \cite{Har16} \cite{S05}, most notably when $n=2$, where $\Omega$ is compatible with a pseudo-K\"ahler structure on $T\S^2$ with metric of signature $(2,2)$ \cite{GK}.
This in turn has been used to study differential-geometric problems in $\R^3$ \cite{GK2} \cite{GK3}. 
\\

An immersed submanifold $\Gamma\subseteq T\mathbb{S}^n$ is said to be \textit{isotropic} if $\Omega|_{\Gamma}=0$. The aim of this paper is to study isotropy in $T\S^n$ from the perspective of $\mathbb{L}(\R^{n+1})$. We first prove that $\Gamma$ is isotropic if and only if the lines are orthogonal to a submanifold of $\R^{n+1}$, at least locally (see Theorem \ref{Thm: structure theorem for orthogonal submanifolds}, Propositions \ref{prop: isotropic implies existance of orthogonal sub manifolds} and \ref{prop: orthogonal implies isotropic} - for a higher dimensional analogue see \cite{AB18}). Submanifolds of $T\S^n$ whose points (oriented lines) are defined through tangency to a submanifold of $\R^{n+1}$ are then studied. Formally, given a pair $(S,v)$ where $S\subset\R^{n+1}$ is an immersed submanifold and $v\in\Gamma(TS)$ a vector field of unit length, we define a map 
\begin{equation}\label{eq: map phi in intro}
\Phi_v:S\to T\S^n,
\end{equation}
which takes $p\in S$ to the oriented line passing through $p$ in the direction $v(p)$. Our first main theorem characterises isotropy for such images in $T\S^n$. Let $\langle\cdot,\cdot\rangle$ and $\overline{\nabla}$ denote the flat metric and connection on $\R^{n+1}$, respectively.
\begin{Mainthm}{1}\label{mainthm1}
Suppose $\Phi_v$ is an immersion and $\Phi_v(S)\subset T\S^n$ is immersed. The following are equivalent    \begin{enumerate}[label=(\alph*), itemsep=4pt, topsep=3pt]
        \item $\Phi_v(S)$ is isotropic.
        \item Each $p\in S$ has an open neighbourhood $U\subseteq S$ on which $v|_U=\mbox{grad~}f$ for some smooth function $f:U\to \mathbb{R}$. If $S$ is simply connected, $U=S$
        \item The distribution $v^\perp$ is integrable and $v$ is a geodesic vector field on $S$.
    \end{enumerate}
\end{Mainthm}

 We remark that condition $(c)$ is equivalent to $v$ being a normal vector field to a local foliation of $S$ by equidistant hypersurfaces. The above characterisation is then applied to study focal sets of isotropic families of lines. In this paper we define a subset of $\R^{n+1}$ called the \textit{focal set} of a family of oriented lines. In the case such lines are hypersurface-orthogonal, our definition (Definition \ref{def: focal points}) reduces (\mbox{Proposition \ref{Prop: orthgononal submanifolds distance from focal set is radii of curvature}}) to the classical definition, given as follows:

 For an oriented, strictly convex hypersurface $\Sigma\subset\R^{n+1}$ with unit normal vector field $\nu$, each radius of curvature $r_i:\Sigma\to \R$ ($i=1,\ldots,n$) defines a map $\mathcal{Y}_i:\Sigma\to\R^{n+1}$
\begin{equation} \label{eq: maps from surface to focal set}
    \mathcal{Y}_i(q):=q-r_i(q)\nu(q).
    \end{equation}
Classically, the image $\mathcal{Y}_i(\Sigma)$ is called the \textit{focal sheet of $\Sigma$ associated to $r_i$}. The union of these sheets for each radius of curvature defines the \textit{focal set} and the elements, the \textit{focal points} of $\Sigma$ \cite{CR78}. Consider the inverse problem:
\\ \hfill \\
\begin{minipage}{\linewidth}
\textbf{Problem}:~Given a hypersurface $S$, construct the hypersurfaces $\Sigma$ for which $S$ arises as a focal sheet.
\end{minipage}\\

If one fixes $S$, a hypersurface $\Sigma$ satisfying $S=\mathcal{Y}_i(\Sigma)$, induces a vector field over $S$ satisfying $(c)$ by Proposition \ref{prop: orthogonal hypersurface gives focal sets hypersurface folliation}. Conversely any vector field $v$ satisfying $(c)$ gives rise to an isotropic submanifold $\Phi_v(S) \subset T\S^n$ by Main Theorem \ref{mainthm1}. Under natural assumptions, e.g. $\Phi_v$ is injective and $\pi_1(S)=0$, a hypersurface $\Sigma\subset\R^{n+1}$ exists which is orthogonal to every oriented line of $\Phi_v(S)$, by Corollary \ref{cor: gamma simply connected and orthogonal surf} and Lemma \ref{lem: dphi is injective iff v is nowhere geodesic}. Then $S=\mathcal{Y}_i(\Sigma)$ for some $i\in\{1,\ldots,n\}$ by Proposition \ref{Prop: orthgononal submanifolds distance from focal set is radii of curvature}. The relation between a hypersurface and the focal set of its normal lines is a duality of orthogonality and tangency - a generalisation of the involute and evolute for plane curves, introduced by Huygens in 1673 when designing the optimal clock pendulum \cite{Arnold94} \cite{Arnold95} \cite{Huygens1673} \cite{Wilczynski16}.\\

Although an initial hypersurface $\Sigma$ may be smooth, the focal sheets $\mathcal{Y}_i(\Sigma)$ often have singularities. Understanding the types of singularities that can arise in this setting has been the subject of extensive study; see \cite{A90} \cite{BW91} \cite{BT23} \cite{Port71} and references therein. A source of singularities for the focal set are the zeros of the \textit{astigmatism}; $s_{ij}:\Sigma \to \R$
\begin{equation}
    s_{ij}(q):=r_i(q)-r_j(q).
\end{equation}
For $i\neq j$, $s_{ij}(q)=0$ iff $r_i(q)=r_j(q)$, thus at such points the radii of curvature have a multiplicity greater than $1$. At such points the functions $r_i$ become non-differentiable and the focal sheets lose regularity.
Here we work in the smooth scenario in which $\mathcal{Y}_i$ is a diffeomorphism and so $\mathcal{Y}_i(\Sigma)$ is a hypersurface. We investigate the curvature properties of such focal sets, a classical result in this direction being one of Bianchi (Theorem \ref{Bianchi thm}, 1874): Surfaces in $\R^3$ with constant astigmatism have focal sheets of constant and equal negative Gaussian curvature (i.e. are pseudo-spheres) \cite{B1874}.\\ Our next main theorem is a generalisation:
\begin{Mainthm}{2}\label{mainthm2}
Let $\Sigma\subset \R^{n+1}$ be a $C^3$-smooth, strictly convex hypersurface with unit normal field $\nu$ and radii of curvature $(r_i)_{i=1}^n$ of multiplicity $1$. For $i=1,2$ let $S_i:=\mathcal{Y}_i(\Sigma)$ be the focal sheet associated to $r_i$, and assume $\mathcal{Y}_i$ a diffeomorphism. Define $X_i\in\Gamma(T\Sigma)$ by
    \begin{align}\label{eq: normalised p vector}
       &r_i\overline{\nabla}_{X_i}\nu =X_i &&\text{and} &&\td r_i(X_i)=1,
    \end{align}
     so $X_i$ is a principal vector field of $\Sigma$. The sectional curvature of $S_1$, denoted $K_1$, satisfies
    \begin{equation*}
    (\mathcal{Y}_1^\ast K_1)\big(X_1,X_2)=\frac{\td s_{12}(X_1)-1}{(s_{12})^2}.
    \end{equation*}
\end{Mainthm}
Bianchi's Theorem follows by taking $n=2$ and the astigmatism of $\Sigma$ as constant (see \cite{LP20} for a classification of such $\Sigma$ in the rotationally symmetric setting). Note that above, $S_1$ and $S_2$ are arbitrary, with Main Theorem \ref{mainthm2} applying to any two distinct focal sheets of $\Sigma$.
Main Theorem \ref{mainthm2} thus asserts that a principal vector field $X$ of $\Sigma$, together with any other distinct principal vector field, induce a natural plane field on the focal sheet associated to $X$. The sectional curvature of the plane is determined entirely by the astigmatism of $\Sigma$. Summing over these planes gives:
\begin{Mainthm}{3}\label{mainthm3}
    Let $\Sigma$, $\nu$ and $(r_i)_{i=1}^n$ be as in Theorem \ref{mainthm2}. Let $S_j:=\mathcal{Y}_j(\Sigma)$ be focal sheets with $\mathcal{Y}_j$ a diffeomorphism for all $j\in\{1,\ldots,n\}$. If $X_j\in\Gamma(T\Sigma)$ satisfy equation (\ref{eq: normalised p vector}), then for $i\in\{1,\ldots,n\}$ the Ricci curvature of $S_i$ satisfies
    \[
    (\mathcal{Y}_i^\ast\mathrm{Ric})(X_i,X_i)=\sum_{\substack{j=1\\j\neq i}}^n\frac{X_i(s_{ij})-1}{(s_{ij})^2}.
    \]
\end{Mainthm}

In fact, the above constructions are not limited to codimension $1$ and work on any embedded submanifold subject to appropriate conditions (see Remark \ref{rem: Thm 2 with arb codim}).\\

Finally, we note that the relationship between two focal sheets of a hypersurface may be viewed through the lens of integrable systems. A \textit{geometric Bianchi-B\"acklund transformation} (BBT) is (broadly) a translation along a family of lines tangent to two submanifolds of $\R^{n+1}$. When such lines are viewed as tangent vector fields (Cf. the map (\ref{eq: map phi in intro})), \mbox{Proposition \ref{prop: submanfolds tangent to the lines are focal}} shows that a BBT defines a map from the focal set of its line family to itself. Given a hypersurface $\Sigma$ with astigmatisms $(s_{ij})_{i,j=1}^n$ and satisfying the conditions of Main Theorem \ref{mainthm2}, the map
\begin{equation}\label{equ: map between focal}
\mathcal{Y}_j\circ\mathcal{Y}_i^{-1}: \mathcal{Y}_i(\Sigma) \to \mathcal{Y}_j(\Sigma),
\end{equation}
is a BBT, translating along lines tangent to both focal sheets by a distance $|s_{ij}|$.\\

More narrowly, the classical BBT maps between pseudo-spherical surfaces in $\R^{3}$, and moves points a constant distance along common tangent lines \cite{RaS82}. Numerous generalisations of the classical BBT exist, including pseudo-spherical surfaces in higher codimension \cite{Gorkavvy2012}\cite{Gorkavvy2015} and $n$-dimensional hyperbolic spaces in ${\mathbb R}^{2n-1}$ \cite{Aminov1978} \cite{Aminov2011} \cite{TaT79} \cite{Terng1980}.
If $\Sigma$ is of constant astigmatism, the map (\ref{equ: map between focal}) is a generalisation of the classical BBT. In our generalisation the family of lines must be isotropic and Main Theorem \ref{mainthm2} implies that the focal sheets only contain certain planes of negative sectional curvature, rather than being pseudo-spherical.\\ 

The paper is organised as follows. Section \ref{sec:preliminaries} fixes notation and describes in more detail the symplectic structure $(T\S^n,\Omega)$, as well as the identification of $T\S^n$ with $\mathbb{L}(\R^{n+1})$. The focal set of a submanifold $\Gamma\subset T\S^n$ is defined in Section \ref{sec:focal sets} and its properties described. In Section $\ref{sec: lines normal to submanifolds}$ we show that families of lines are isotropic iff they are submanifold-orthogonal (Propositions \ref{prop: isotropic implies existance of orthogonal sub manifolds} and \ref{prop: orthogonal implies isotropic}) and a structure theorem for such orthogonal submanifolds is proven (Theorem \ref{Thm: structure theorem for orthogonal submanifolds}). Futhermore, lines orthogonal to hypersurfaces are shown to have focal sets which coincide with the classical definition (\mbox{Proposition \ref{Prop: orthgononal submanifolds distance from focal set is radii of curvature}}). Section \ref{sec: lines tangent submanifolds} proves Main Theorem \ref{mainthm1}. In Section \ref{sec: The Curvature of Focal Sets}, expressions for the sectional and Ricci curvature of focal sets (when they are $C^2$-smooth) of isotropic line families are given in term of the distance between focal points (Theorems \ref{Thm: sectional curvature from mu} and \ref{Thm: curvature control from multiple focal set componants}). Finally in Section \ref{sec: the focal sets of hypersurfaces} we specialise to the focal sets of hypersurfaces in $\R^{n+1}$, wherein Main Theorem \ref{mainthm2} and Main Theorem \ref{mainthm3} are proven.

\section{Preliminaries}\label{sec:preliminaries}
\subsection{The Symplectic Form and Splitting of \texorpdfstring{$TT\S^n$}{TTSn}} \hfill \\
Let $g$ be the round metric on $\S^n$ and $D$ the Levi-Civita connection. $D$ induces a splitting of the double tangent bundle $TT\S^n=H\S^n\oplus V\S^n$ with $V\S^n$ and $H\S^n$ the \textit{vertical} and the \textit{horizontal} subbundles of $TT\S^n$, respectively. They are defined as
\begin{align*}
    &V\S^n:=\ker (\td\pi), && &&H\S^n:=\ker(\mathcal{K}),
\end{align*}
with $\pi:T\S^n\to \S^n$ being the canonical projection map and $\mathcal{K}:TT\S^n\to T\S^n$ being the \textit{connection map} defined as follows: Given $(\mu_0,X)\in TT\S^n$, let $\mu(t)$ be a curve in $T\S^n$ such that $\mu(0)=\mu_0$ and $\mu'(0)=X$ and denote by $\frac{\mathrm{D}}{\td t}$ the covariant derivative with respect to $D$ along the curve $(\pi\circ\mu)(t)$ in $\S^n$. Now define
\[
\mathcal{K}(\mu_0,X):=\left.\frac{\mathrm{D}\mu(t)}{\td t}\right|_{t=0}.
\]
The restrictions $\td \pi|_{H\S^n}:H\S^n\to T\S^n$ and $\mathcal{K}|_{V\S^n}:V\S^n\to T\S^n$ are isomorphisms and induce the isomorphism $TT\S^n =H\S^n\oplus V\S^n \simeq T\S^n\oplus T\S^n$ via  $X\simeq(\td\pi(X), \mathcal{K}(X))$. See \cite{Dom} and \cite{Kow} for further details. Additionally, $T\S^n$ supports a symplectic structure $\Omega:=-\td \theta$, with $\theta:TT\S^n\to\mathbb{R}$ the associated Liouville $1$-form given by
\begin{equation}\label{eq:louiville form in terms of round metric}
    \theta_{(p,\beta)}(X)=g_p(\td \pi_{(p,\beta)}(X),\beta),
  \end{equation}
where $(p,\beta)\in T\S^n$ and $X\in T_{(p,\beta)}T\S^n$. Upon taking an exterior derivative one finds
\begin{equation}\label{e:symplectic_structure}
\Omega(X,Y)=g(\td\pi(X),\mathcal{K}(Y))-g(\td\pi(Y),\mathcal{K}(X)),
\end{equation}
where for brevity we have omitted the base point $(p,\beta)$. An equivalent definition of $\theta$ can be given by pulling back the tautological $1$-form on $T^\ast \S^n$ under the musical isomorphism $\flat:T\S^n\to T^*\S^n$ induced by $g$. A coordinate description in any Riemannian manifold, see \cite{AGK11} \cite{AGK11a}.
 \subsection{Oriented Lines and \texorpdfstring{$T\S^n$}{TSn}}\hfill \\
 Denote by $\langle \cdot, \cdot \rangle$ the Euclidean inner product and by $\overline{\nabla}$ the flat connection on $\mathbb{R}^{n+1}$. A given point $(\xi,\eta)\in T\S^n\subseteq\R^{n+1}\times \R^{n+1}$ can be identified with the oriented line $\{\eta+t\xi\,:\, t\in {\mathbb R}\}$, so that $\xi$ gives the direction of the line and $\eta$ the closest point on the line to the origin. Given a parameter $r\in \mathbb{R}$ and an oriented line $(\xi,\eta)$, the map
 \begin{equation}\label{eq:definition_of_psi}
\begin{aligned}
    \Psi:&~T\mathbb{S}^n\times \mathbb{R}\to\mathbb{R}^{n+1}\\
    &((\xi, \eta),r) \longmapsto \eta+r\xi,
\end{aligned}
\end{equation}
returns the point in $\mathbb{R}^{n+1}$ lying a distance of $r$ along the line from $\eta\in\mathbb{R}^{n+1}$. The image $\Psi(\{(\xi,\eta)\}\times\mathbb{R})$ is thus the set of points in $\mathbb{R}^{n+1}$ which lie on the oriented line $(\xi,\eta)$. More generally for a submanifold $\Gamma\subseteq T\mathbb{S}^n$ the set $\Psi(\Gamma\times \mathbb{R})$ is the set of all points of $\mathbb{R}^{n+1}$, which lie on some line in $\Gamma$.

Conversely, given an  oriented line $\{p+tv\,:\, t\in {\mathbb R}\}$, for $p,v\in \mathbb{R}^{n+1}$ and $|v|=1${,} we can identify this line with the pair $(v,p-\langle p,v\rangle v)$ regarded as an element of $T\S^n$, due to $v$ being unit and orthogonal to $v-\langle p, v\rangle v$. Thus submanifolds of $T\S^n$ correspond to families of lines in $\mathbb{R}^{n+1}$ and vice versa. Suppose  $S\subseteq\mathbb{R}^{n+1}$ is a smooth immersed submanifold and $v$ a smooth section of $T\mathbb{R}^{n+1}$ over $S$
\[
v: p\in S \longmapsto v(p) \in T_p\mathbb{R}^{n+1},
\]
such that $||v(p)||=1$. We will think of the pair $(S,v)$ as generating a family of oriented lines, with $v$ assigning to each $p\in S$ the oriented line which passes through $p$ in the direction $v(p)$. This assignment is done through the smooth mapping
\begin{equation}\label{equ: definition of Phi}
\begin{aligned}
\Phi_v:&\,S\rightarrow T{\mathbb S}^n  \\
&p\longmapsto (v(p),p-\left<v(p),p\right>v(p)).
\end{aligned}
\end{equation}
For a general choice of $S$ and $v$, the subset $\Phi_v(S)\subset T\S^n$ may not have the structure of a submanifold, even when the map $\Phi_v$ is an immersion. We will however frequently assume this is the case. 
\subsection{Notation and Conventions}
\begin{enumerate}[label=\arabic*.]
    \item When we have the need to consider other sections of $T\R^{n+1}$ over $S$, say $X$ or $Y$, we will use the notation $\Phi_X$ or $\Phi_{Y}$ etc. Often $\nu$ will denote a normal section, $\nu\in\Gamma(TS)$, and $v$ will denote either an arbitrary or tangent section.
    \item There may be topological restrictions on $S$ for $v$ to be defined globally, specifically when $v\in\Gamma(TS)$. Throughout this work we assume the pair $(S,v)$ are sufficient for this to be the case. 
    \item Given a smooth manifold $\mathcal{M}$ a subset $\mathcal{N}\subseteq\mathcal{M}$ is called immersed if $\mathcal{N}$ itself is a smooth manifold and the inclusion map $\mathcal{N}\hookrightarrow\mathcal{M}$ is a smooth immersion.
    \item As usual we will identify $\mathbb{R}^n$ with its tangent spaces, permitting us to add together elements in different fibres of $T\mathbb{R}^{n}$.
    \item In this paper `smooth' will be taken to be $C^{2}$, unless otherwise stated.
\end{enumerate}
\section{Focal Points of Submanifolds in \texorpdfstring{$T\S^n$}{TSn}}
\label{sec:focal sets}
We first describe the derivative $\td\Psi$ of the map $\Psi$, defined in (\ref{eq:definition_of_psi}), in terms of the maps $\td \pi$, $\mathcal{K}$ and the Liouville form $\theta$. 

\begin{Lem}\label{Lem: decomposition of psi}
     For any $Y\in T_{((\xi_0,\eta_0),r_0)}(T\S^n\times \mathbb{R})\simeq  T_{(\xi_0,\eta_0)}T\S^n \oplus T_{r_0}\mathbb{R}$ write $Y=X+\rho\displaystyle{\pdv{}{r}}$ with $X\in T_{(\xi_0,\eta_0)}T\S^n$, $\rho\in\mathbb{R}$, and $r$ the canonical coordinate on $\mathbb{R}$. Then
     \begin{equation}\label{eq: decomposition of dPsi}
     \td\Psi_{((\xi_0,\eta_0),r_0)}(Y)=r_0\td\pi(X)+\mathcal{K}(X)+(\rho-\theta(X))\xi_0.
     \end{equation}
 \end{Lem}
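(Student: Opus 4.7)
The plan is a direct computation: choose a curve in $T\S^n\times\mathbb{R}$ realising the tangent vector $Y$, differentiate $\Psi$ along it, and rewrite the resulting three terms using the defining properties of $\td\pi$, $\mathcal{K}$, and $\theta$.

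\textbf{Step 1 (Choice of curve).} I will pick a smooth curve $\mu(t)=(\xi(t),\eta(t))$ in $T\S^n$ with $\mu(0)=(\xi_0,\eta_0)$ and $\mu'(0)=X$, and set $r(t):=r_0+\rho t$ so that $(\mu(t),r(t))$ has velocity $Y$ at $t=0$. Applying $\Psi$ and differentiating at $t=0$ yields
\begin{equation*}
\td\Psi_{((\xi_0,\eta_0),r_0)}(Y)=\eta'(0)+r_0\,\xi'(0)+\rho\,\xi_0,
\end{equation*}
treating everything as vectors in $\mathbb{R}^{n+1}$ (using Convention 4).

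\textbf{Step 2 (Rewriting $\xi'(0)$).} Since $\pi(\xi,\eta)=\xi$, we have $\xi'(0)=\td\pi_{(\xi_0,\eta_0)}(X)$ directly, viewed as an element of $T_{\xi_0}\S^n\subset\mathbb{R}^{n+1}$.

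\textbf{Step 3 (Rewriting $\eta'(0)$).} Here the key input is that the Levi-Civita connection $D$ on $\S^n$ is the tangential part of the flat connection $\overline{\nabla}$ on $\R^{n+1}$. Thus
\begin{equation*}
\mathcal{K}(X)=\left.\frac{\mathrm{D}\eta(t)}{\td t}\right|_{t=0}=\eta'(0)-\langle\eta'(0),\xi_0\rangle\,\xi_0,
\end{equation*}
i.e.\ $\eta'(0)=\mathcal{K}(X)+\langle\eta'(0),\xi_0\rangle\,\xi_0$. To identify the normal component, I will differentiate the constraint $\langle\xi(t),\eta(t)\rangle=0$ (valid because $\eta(t)\in T_{\xi(t)}\S^n$) at $t=0$, giving $\langle\eta'(0),\xi_0\rangle=-\langle\xi'(0),\eta_0\rangle=-g_{\xi_0}(\td\pi(X),\eta_0)=-\theta_{(\xi_0,\eta_0)}(X)$ by \eqref{eq:louiville form in terms of round metric}. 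Hence $\eta'(0)=\mathcal{K}(X)-\theta(X)\,\xi_0$.

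\textbf{Step 4 (Assembly).} Substituting the expressions from Steps 2 and 3 into Step 1 and grouping terms gives exactly \eqref{eq: decomposition of dPsi}. The main (mild) obstacle is Step 3: keeping track of the fact that $T\S^n$ sits inside $\R^{n+1}\times\R^{n+1}$ subject to the orthogonality constraint, so that the $\xi_0$-component of $\eta'(0)$ is not free but is pinned down by differentiating this constraint. Once this is observed, the appearance of $-\theta(X)$ as that normal component — and thus of $\rho-\theta(X)$ as the coefficient of $\xi_0$ in the final formula — is forced.
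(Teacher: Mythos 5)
Your proof is correct, and it takes a mildly different route from the paper's. The paper views $r\mapsto\Psi((\xi,\eta),r)$ as a geodesic in $\mathbb{R}^{n+1}$, realises $\td\Psi(Y)$ as a Jacobi field $J(r)$ along it with initial data $J(0)=\mathcal{K}(X)+(\rho-\theta(X))\xi_0$ and $\nabla_r J(0)=\td\pi(X)$, and then integrates the (trivial, flat) Jacobi equation to obtain $J(r_0)$. You instead differentiate the affine formula $\Psi(\mu(t),r(t))=\eta(t)+r(t)\xi(t)$ directly, getting $\eta'(0)+r_0\xi'(0)+\rho\xi_0$ at once. Both proofs hinge on the same two facts: $\xi'(0)=\td\pi(X)$, and the identity $\mathcal{K}(X)=\eta'(0)+\theta(X)\xi_0$, which you (and the paper) derive by combining $\mathcal{K}(X)=\eta'(0)-\langle\eta'(0),\xi_0\rangle\xi_0$ (tangential projection defining $D$) with the differentiated constraint $\langle\xi(t),\eta(t)\rangle\equiv0$ to pin down the normal component as $-\theta(X)$. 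Your direct differentiation is more elementary and arguably cleaner for the flat ambient space; the paper's Jacobi-field framing is a detour here, though it is the formulation that would generalise to a curved ambient manifold.
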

 \begin{proof}
      If $\gamma_{(\eta,\xi)}$ is the geodesic passing through $\eta$ in the direction $\xi$, then we may write $\Psi((\xi,\eta),r)=\gamma_{(\eta,\xi)}(r)$. Hence if $\sigma(t)=(\xi(t),\eta(t))$ is a curve such that $X=\sigma'(0)$,
    \begin{align*}
        \td\Psi_{((\xi_0,\eta_0),r_0)}(Y)&=\td\Psi_{((\xi_0,\eta_0),r_0)}(X)+\rho\td\Psi_{((\xi_0,\eta_0),r_0)}\left(\displaystyle\frac{\partial}{\partial r}\right)\\
        &= \left.\dv{}{t}\right|_{t=0}\gamma_{(\eta(t),\xi(t))}(r_0)+\rho \gamma_{(\eta_0,\xi_0)}{}'(r_0).
    \end{align*}
    Hence $\td\Psi_{((\xi_0,\eta_0),r_0)}(Y)=J(r_0)$ where $J(r)$ is the Jacobi field along $(\xi_0,\eta_0)$ satisfying
    \begin{align*}
        &J(0)=\eta'(0)+\rho \xi_0, &&\nabla_rJ (0)=\xi'(0).
    \end{align*}
     Since, $\xi(t)\cdot\eta(t)\equiv0$, the connection $D$ on $\S^n$ is tangential, and by equation (\ref{eq:louiville form in terms of round metric});
     \[
     \mathcal{K}(X)=\left.\frac{D\eta(t)}{dt}\right|_{t=0}=\eta'(0)-\langle\eta'(0),\xi_0\rangle\xi_0=\eta'(0)+\langle\eta_0,\xi'(0)\rangle\xi_0=\eta'(0)+\theta(X)\xi_0.
     \]
     Hence the initial conditions may be expressed as
     \begin{align*}
        &J(0)=\mathcal{K}(X)+(\rho-\theta(X))\xi_0, &&\nabla_rJ (0)=\td\pi(X).
    \end{align*}
     The claim now follows by integrating the Jacobi equation for $J(r)$.
 \end{proof}
 \begin{Def}\label{def: focal points}
    A \emph{focal point} of a submanifold $\Gamma\subseteq T\mathbb{S}^n$ is a point $p\in\Psi(\Gamma\times \mathbb{R})$ such that $\Psi|_{\Gamma\times\mathbb{R}}$ fails to be an immersion at some point in $\Psi^{-1}(p)$. The union over such $p$ is called the \emph{focal set of} $\Gamma$.
\end{Def}
For a Riemannian manifold $(\mathcal{M},\mathbb{G})$ the notion of a focal point of a submanifold also exists, defined in terms of Jacobi fields \cite{DoCarmo92}. Hence a submanifold of $(T\S^n,\mathbb{G})$, for some metric $\mathbb{G}$, may have focal points in this sense. We remark that Definition \ref{def: focal points} is distinct from this notion.
\begin{Prop} \label{prop: characterisation of focal points}
    Let $((\xi,\eta),r)\in \Gamma \times \mathbb{R}$. Then $p=\Psi((\xi,\eta),r)$ is a focal point of $\Gamma$ if and only if there exists a non-zero $X\in T_{(\xi,\eta)}\Gamma$ such that $r\td\pi(X)+\mathcal{K}(X)=0$.
\end{Prop}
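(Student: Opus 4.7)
The plan is to reduce the statement directly to Lemma \ref{Lem: decomposition of psi} and then exploit an orthogonality to decouple the resulting equation. By Definition \ref{def: focal points}, $p = \Psi((\xi,\eta),r)$ is a focal point of $\Gamma$ precisely when $\td\Psi|_{T(\Gamma\times\mathbb{R})}$ fails to be injective at $((\xi,\eta),r)$, equivalently when there exists a non-zero $Y = X + \rho\,\partial_r$, with $X \in T_{(\xi,\eta)}\Gamma$ and $\rho \in \mathbb{R}$, in the kernel of $\td\Psi$. Lemma \ref{Lem: decomposition of psi} identifies this kernel condition with
\[
r\td\pi(X) + \mathcal{K}(X) + (\rho - \theta(X))\xi = 0.
\]

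The main step is the observation that the vectors $\td\pi(X)$ and $\mathcal{K}(X)$ both lie in $T_\xi\mathbb{S}^n = \xi^{\perp} \subset \mathbb{R}^{n+1}$, so the displayed equation is a sum of a vector orthogonal to $\xi$ and a scalar multiple of $\xi$. The orthogonal decomposition $\mathbb{R}^{n+1} = T_\xi\mathbb{S}^n \oplus \mathbb{R}\xi$ thus splits it into the two independent conditions
\[
r\td\pi(X) + \mathcal{K}(X) = 0 \qquad \text{and} \qquad \rho = \theta(X).
\]

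With this decoupling in hand, the proposition reduces to bookkeeping. Given a non-zero $X \in T_{(\xi,\eta)}\Gamma$ satisfying $r\td\pi(X) + \mathcal{K}(X) = 0$, setting $\rho := \theta(X)$ produces a non-zero $Y$ in the kernel of $\td\Psi$, so $p$ is a focal point. Conversely, if $Y = X + \rho\,\partial_r$ is a non-zero kernel vector, then $X$ must satisfy $r\td\pi(X) + \mathcal{K}(X) = 0$; moreover $X$ is non-zero, for if $X = 0$ then $\rho = \theta(X) = 0$, forcing $Y = 0$ and contradicting the choice of $Y$.

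I do not anticipate a real obstacle: the proposition is essentially a direct reading of Lemma \ref{Lem: decomposition of psi}, with the only conceptual content being the orthogonality observation that separates the $\xi^{\perp}$-component of the kernel equation from the $\mathbb{R}\xi$-component. The latter always admits a solution (just choose $\rho = \theta(X)$), so the entire obstruction to injectivity is carried by the former condition on $X$ alone.
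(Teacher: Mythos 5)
Your proof is correct and follows essentially the same route as the paper: apply Lemma \ref{Lem: decomposition of psi}, use that $\td\pi(X)$ and $\mathcal{K}(X)$ lie in $\xi^\perp$ to split the kernel equation into a $\xi^\perp$-component and a $\mathbb{R}\xi$-component, and note that the latter is always satisfiable by choosing $\rho=\theta(X)$. You are slightly more explicit than the paper in checking that a non-zero kernel vector $Y$ forces $X\neq 0$, which is a small but genuine improvement in rigour.
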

\begin{proof}
$p$ is focal iff there exists non-zero $Y\in T_{((\xi,\eta),r)}(\Gamma\times \R)$ with $\td\Psi_{((\xi,\eta),r)}(Y)=0$. Write $Y=X+\rho\pdv{}{r}$. Since $\td\pi(X),\mathcal{K}(X)\perp\xi$, Lemma \ref{Lem: decomposition of psi} implies $p$ is focal iff there exists $X\in T_{(\xi,\eta)}\Gamma$ and $\rho\in \mathbb{R}$ such that $r\td\pi(X)+\mathcal{K}(X)=0$ and $\rho=\theta(X)$. 
\end{proof}
There is a more geometric characterisation of a focal point. Denote by $\mathcal{L}(c)$ the set of oriented lines passing through a fixed point $c\in\mathbb{R}^{n+1}$. It is not hard to see that
\[
\mathcal{L}(c)=\{(\xi,c-\langle c,\xi\rangle\xi)~:~\xi\in\S^n\}.
\]
The following lemma shows that $c\in {\mathbb R}^{n+1}$ is the only focal point of $\mathcal{L}(c)$.
\begin{Lem}\label{Lem: tangent space of lines through a point}
    Suppose $(\xi,\eta)\in \mathcal{L}(c)$ for some $c\in\mathbb{R}^{n+1}$. Then
    \[
    T_{(\xi,\eta)}\mathcal{L}(c)=\{X\in T_{(\xi,\eta)}T\S^n: \langle c,\xi\rangle\td\pi(X)+\mathcal{K}(X)=0\}.
    \]
\end{Lem}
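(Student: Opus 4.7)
The plan is to parametrize $\mathcal{L}(c)$ explicitly, compute the derivative of the parametrization in terms of $\td\pi$ and $\mathcal{K}$, verify that it satisfies the claimed linear relation, and then conclude by a dimension count.

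First I would introduce the smooth map
\[
F:\S^n\to T\S^n,\qquad F(\xi)=\bigl(\xi,\,c-\langle c,\xi\rangle\xi\bigr),
\]
whose image is $\mathcal{L}(c)$. For a tangent vector $X\in T_\xi\S^n$, represented by a curve $\xi(t)$ with $\xi(0)=\xi$ and $\xi'(0)=X$, the first component gives $\td\pi\bigl(\td F_\xi(X)\bigr)=X$, so $F$ is an immersion and $\mathcal{L}(c)$ is $n$-dimensional. For the second component, set $\eta(t):=c-\langle c,\xi(t)\rangle\xi(t)$; differentiating and then applying the formula for the Levi-Civita connection $D$ on $\S^n$ (i.e.\ subtracting the component along $\xi$, exactly as in the proof of Lemma \ref{Lem: decomposition of psi}), I obtain
\[
\mathcal{K}\bigl(\td F_\xi(X)\bigr)=\eta'(0)-\langle\eta'(0),\xi\rangle\xi=-\langle c,\xi\rangle X,
\]
because $\eta'(0)=-\langle c,X\rangle\xi-\langle c,\xi\rangle X$ and $\langle X,\xi\rangle=0$.

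Combining these two computations gives
\[
\langle c,\xi\rangle\,\td\pi\bigl(\td F_\xi(X)\bigr)+\mathcal{K}\bigl(\td F_\xi(X)\bigr)=\langle c,\xi\rangle X-\langle c,\xi\rangle X=0,
\]
which proves the inclusion $T_{(\xi,\eta)}\mathcal{L}(c)\subseteq\{X:\langle c,\xi\rangle\td\pi(X)+\mathcal{K}(X)=0\}$.

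To upgrade this to equality I would argue by dimension. The linear map
\[
L:T_{(\xi,\eta)}T\S^n\longrightarrow T_\xi\S^n,\qquad X\longmapsto \langle c,\xi\rangle\td\pi(X)+\mathcal{K}(X),
\]
is surjective: given $w\in T_\xi\S^n$, the vertical lift $X$ of $w$ (i.e.\ the unique vertical vector with $\mathcal{K}(X)=w$) satisfies $\td\pi(X)=0$ and therefore $L(X)=w$. Since $\dim T_{(\xi,\eta)}T\S^n=2n$ and $\dim T_\xi\S^n=n$, the kernel of $L$ has dimension $n$, matching $\dim T_{(\xi,\eta)}\mathcal{L}(c)=n$, and the stated equality follows. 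The only subtle point, though routine, is the projection onto $\xi^\perp$ when computing $\mathcal{K}\bigl(\td F_\xi(X)\bigr)$; once that is handled, the rest is a direct dimension comparison.
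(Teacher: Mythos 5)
Your proof is correct. The computation $\td\pi(\td F_\xi(X))=X$ and $\mathcal{K}(\td F_\xi(X))=-\langle c,\xi\rangle X$ checks out (differentiating $\eta(t)=c-\langle c,\xi(t)\rangle\xi(t)$ and subtracting the $\xi$-component gives exactly $-\langle c,\xi\rangle X$ since $\langle X,\xi\rangle=0$), and the vertical-lift argument does show the linear map $L$ is surjective, so its kernel has dimension $n=\dim\mathcal{L}(c)$.

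The route differs from the paper's in the equality step. The paper proves $\subseteq$ by differentiating the identity $c=\Psi\bigl((\xi,\eta),\langle c,\xi\rangle\bigr)$ on $\mathcal{L}(c)$ and invoking Lemma \ref{Lem: decomposition of psi} for $\td\Psi$; you instead differentiate the explicit parametrization $F$ directly, which is more self-contained and avoids re-using the Jacobi-field decomposition. For $\supseteq$, the paper constructs, for each $X$ in the right-hand side, an explicit curve $\sigma(t)=\bigl(\xi(t),c-\langle c,\xi(t)\rangle\xi(t)\bigr)$ with $\sigma'(0)=X$ (matching horizontal and vertical components separately), while you replace that construction with a rank–nullity count on the map $L$. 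Both are valid; the dimension count is a bit slicker and avoids having to verify that the curve's derivative equals $X$ component by component, at the mild cost of first establishing that $F$ is an embedding so that $\dim T_{(\xi,\eta)}\mathcal{L}(c)=n$ (immediate here since $F$ is an injective immersion of the compact manifold $\S^n$, which you tacitly use but could state).
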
 \begin{proof}
We proceed by double inclusion. Fix $c\in\mathbb{R}^{n+1}$. For all $(\xi,\eta)\in\mathcal{L}(c)$ we have
\[
c=\Psi((\xi,\eta),\langle c,\xi\rangle).
\]
Differentiating with respect to $X\in T_{(\xi,\eta)}\mathcal{L}(c)$ gives $\langle c, \xi \rangle\td\pi(X)+\mathcal{K}(X)=0$ (by Lemma \ref{Lem: decomposition of psi} and $\td\pi(X),\mathcal{K}(X)\perp\xi$), implying the $\subseteq$ direction. To show the $\supseteq$ direction, suppose $X\in T_{(\xi,\eta)}T\S^n$ and $\langle c,\xi\rangle\td\pi(X)+\mathcal{K}(X)=0$. Define the curve $\sigma$ in $\mathcal{L}(c)$ by
\[
\sigma(t)=\big(\xi(t),c-\langle c,\xi(t)\rangle\xi(t)\big),
\]
such that $\sigma(0)=(\xi,\eta)$ and $\xi'(0)=\td\pi(X)$. Note that $\td\pi(\sigma'(0))=\td\pi(X)$ also. Furthermore since $\sigma(t)$ is a curve in $\mathcal{L}(c)$, the already proved $\subseteq$ direction implies
\[
\mathcal{K}(\sigma'(0))=-\langle c,\xi\rangle\td\pi(\sigma'(0))=-\langle c, \xi \rangle \td \pi(X)=\mathcal{K}(X).
\]
Hence as their horizontal and vertical components agree, $X=\sigma'(0)\in T_{(\xi,\eta)}\mathcal{L}(c)$.
\end{proof}
\begin{Cor}
   Let $(
    (\xi,\eta),r)\in \Gamma \times \mathbb{R}$. Then $p=\Psi((\xi,\eta),r)$ is a focal point of $\Gamma$ if and only if $T_{(\xi,\eta)}\Gamma~\cap~T_{(\xi,\eta)}\mathcal{L}(p)\neq \{0\}$.
\end{Cor}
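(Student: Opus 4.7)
The plan is to combine Proposition \ref{prop: characterisation of focal points} with Lemma \ref{Lem: tangent space of lines through a point}, the bridge between the two being the observation that the parameter $r$ measuring position along the oriented line $(\xi,\eta)$ coincides with the inner product $\langle p,\xi\rangle$ when $p = \Psi((\xi,\eta),r)$.

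First I would unpack the identification $(\xi,\eta)\in T\S^n\subset \R^{n+1}\times\R^{n+1}$, which forces $\langle \eta,\xi\rangle =0$. Then for $p=\Psi((\xi,\eta),r)=\eta+r\xi$ I obtain
\[
\langle p,\xi\rangle = \langle \eta,\xi\rangle + r\langle \xi,\xi\rangle = r,
\]
since $\xi\in\S^n$ is a unit vector. This single identity is the key computational step and is immediate.

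Next I would apply Proposition \ref{prop: characterisation of focal points}: $p$ is a focal point of $\Gamma$ if and only if there exists a non-zero $X\in T_{(\xi,\eta)}\Gamma$ with $r\,\td\pi(X)+\mathcal{K}(X)=0$. Substituting $r=\langle p,\xi\rangle$, this condition reads $\langle p,\xi\rangle\,\td\pi(X)+\mathcal{K}(X)=0$, which by Lemma \ref{Lem: tangent space of lines through a point} is precisely the condition that $X\in T_{(\xi,\eta)}\mathcal{L}(p)$. Hence the existence of such a non-zero $X$ is equivalent to $T_{(\xi,\eta)}\Gamma\cap T_{(\xi,\eta)}\mathcal{L}(p)\neq\{0\}$, completing the proof.

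There is essentially no obstacle here: the corollary is a direct repackaging of the algebraic focal condition in geometric terms, and the only non-trivial content is the identification $r=\langle p,\xi\rangle$, which is trivial once one recalls $\eta\perp\xi$. The proof should therefore be a short paragraph citing both the proposition and the lemma.
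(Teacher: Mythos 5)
Your argument is correct and is exactly how the paper intends the corollary to be read: the paper states it without proof immediately after Proposition \ref{prop: characterisation of focal points} and Lemma \ref{Lem: tangent space of lines through a point}, and the bridge you identify, $r=\langle p,\xi\rangle$ from $p=\eta+r\xi$ and $\eta\perp\xi$, is the only observation needed. Nothing to add.
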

\begin{Prop}
    Let $(\xi,\eta)\in \Gamma$. If $\dim \Gamma > n$, every point along the oriented line $(\xi,\eta)$ is focal, or if $\dim \Gamma \leq n$ then there are at most $n$ focal points along $(\xi,\eta)$.
\end{Prop}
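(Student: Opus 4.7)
The plan is to reduce the statement to a rank analysis for the affine-linear pencil identified by Proposition \ref{prop: characterisation of focal points}: setting $k := \dim \Gamma$, the point $\Psi((\xi,\eta),r)$ is focal precisely when the map
\[
\Phi_r : T_{(\xi,\eta)}\Gamma \longrightarrow T_\xi \S^n, \qquad \Phi_r(X) := r\,\td\pi(X) + \mathcal{K}(X),
\]
has nontrivial kernel. Counting focal points along the line $(\xi,\eta)$ thus amounts to counting the values of $r \in \R$ at which this pencil fails to be injective.

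In the case $k>n$, the domain of $\Phi_r$ has strictly larger dimension than the codomain $T_\xi\S^n$ for every $r$, so $\ker \Phi_r \neq \{0\}$ by a dimension count and every point of the line is focal.

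For $k \leq n$, I would fix bases of $T_{(\xi,\eta)}\Gamma$ and $T_\xi\S^n$ and represent $\Phi_r$ by the $n\times k$ matrix pencil $r\mathbf{A}+\mathbf{B}$, whose blocks come from $\td\pi|_{T\Gamma}$ and $\mathcal{K}|_{T\Gamma}$. The focal parameters are exactly those $r$ at which $\operatorname{rank}(r\mathbf{A}+\mathbf{B})<k$, equivalently where all $k\times k$ minors of $r\mathbf{A}+\mathbf{B}$ vanish. Each such minor is a polynomial in $r$ of degree at most $k\leq n$, so it suffices to exhibit one minor that is not identically zero: its root set then contains the focal parameters along the line and has at most $n$ elements.

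The main obstacle is producing a $k\times k$ minor that is a non-zero polynomial in $r$. I would exploit the direct-sum decomposition $TT\S^n = H\S^n\oplus V\S^n$, which forces $\ker \td\pi|_{T_{(\xi,\eta)}\Gamma} \cap \ker \mathcal{K}|_{T_{(\xi,\eta)}\Gamma} = \{0\}$. Splitting $T_{(\xi,\eta)}\Gamma = \ker\td\pi|_{T\Gamma} \oplus W$ with $\td\pi|_W$ injective, and noting that $\mathcal{K}$ is injective on $\ker \td\pi|_{T\Gamma}$, I would select a basis of $T_\xi\S^n$ adapted to the independent families $\{\mathcal{K}(Y_i)\}$ and $\{\td\pi(Z_j)\}$ coming from bases of the two summands. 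In this basis the corresponding $k\times k$ minor has a top-degree coefficient in $r$ equal (up to sign) to a product of two non-vanishing determinants arising from $\mathcal{K}|_{\ker\td\pi|_{T\Gamma}}$ and $\td\pi|_W$, and is therefore a non-zero polynomial whose degree matches $\operatorname{rank}(\td\pi|_{T\Gamma}) \leq k$, bounding the focal set as required.
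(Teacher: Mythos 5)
Your core strategy — a dimension count for $\dim\Gamma > n$ and polynomial‑root counting for $\dim\Gamma \leq n$ — is the same as the paper's. The only real difference is the matrix realisation: the paper writes $T_{(\xi,\eta)}\Gamma = \ker A$ and studies the $(2n-\dim\Gamma)\times n$ pencil $AG(r)$, whereas you work directly with the $n\times k$ pencil representing $\Phi_r$ on $T_{(\xi,\eta)}\Gamma$. These are dual points of view and both reduce the focal condition to the simultaneous vanishing of a family of polynomials in $r$. You have, however, put your finger on a genuine subtlety that the paper's own proof passes over in silence: to conclude finiteness one must exhibit at least one minor that is not identically zero as a polynomial in $r$.

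Unfortunately your proposed fix does not go through. You correctly note that $\{\mathcal{K}(Y_i)\}$ (images of a basis of $\ker\td\pi|_{T_{(\xi,\eta)}\Gamma}$) and $\{\td\pi(Z_j)\}$ (images of a basis of a complement $W$) are each linearly independent, the first since $\ker\td\pi\cap\ker\mathcal{K}=\{0\}$ and the second by construction. But nothing forces the union of the two families to be independent in $T_\xi\S^n$, and in general it is not. For instance, take $n=k=2$, identify $H_{(\xi,\eta)}\S^2\oplus V_{(\xi,\eta)}\S^2\cong\R^2\oplus\R^2$, and let $\Gamma\subset T\S^2$ be any surface with $T_{(\xi,\eta)}\Gamma=\mathrm{span}\{(e_1,0),(0,e_1)\}$. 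Then $Y_1=(0,e_1)$ gives $\mathcal{K}(Y_1)=e_1$ while $Z_1=(e_1,0)$ gives $\td\pi(Z_1)=e_1$: the two families coincide, so every $2\times 2$ minor has vanishing leading coefficient, and indeed for every $r$ the vector $(e_1,-re_1)$ is a non‑zero element of $T_{(\xi,\eta)}\Gamma\cap T_{(\xi,\eta)}\mathcal{L}(c_r)$, so the entire line is focal. Thus a non‑zero minor need not exist, and the bound of at most $n$ focal points can fail even when $\dim\Gamma\leq n$; the statement really requires the caveat that either the whole line is focal or there are at most $n$ focal points. Your instinct to check that a leading coefficient is non‑zero was exactly right — what this example shows is that the remedy is to weaken the claim, not to search harder for an independence argument.
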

\begin{proof}
    Fix $(\xi,\eta)\in\Gamma$ and let $c_r=\Psi((\xi,\eta),r)$, with $r$ a real variable. By Lemma \ref{Lem: tangent space of lines through a point} and since $\td\pi|_{H\S^n}:H\S^n\to T\S^n$ and $\mathcal{K}|_{V\S^n}:V\S^n\to T\S^n$ are linear isomorphisms, bases of $H_{(\xi,\eta)}\mathbb{S}^n,V_{(\xi,\eta)}\mathbb{S}^n$ and $T_\xi \S^n$ can be chosen as to make the identifications
    \begin{align*}
        & H_{(\xi,\eta)}\mathbb{S}^n \oplus V_{(\xi,\eta)}\mathbb{S}^n = \mathbb{R}^{n}\oplus\mathbb{R}^{n}, 
        &&T_{(\xi,\eta)}\Gamma = \ker A, &&T_{(\xi,\eta)}\mathcal{L}(c_r) = \mbox{Im }G(r),
    \end{align*}
    where $A$ and $G(r)=[I_n | -r I_n]^T$ are respectively $(2n-\dim \Gamma) \times 2n$ and $2n\times n$ matrices with $I_n$ the identity matrix. $AG(r)$ is thus a  $(2n-\dim \Gamma) \times n$ matrix and
    \[
    T_{(\xi,\eta)}\Gamma \cap T_{(\xi,\eta)}\mathcal{L}(c_r) \neq \{0\} \iff \ker AG(r) \neq \{0\} \iff \mbox{rank }AG(r) < n.
    \]
    If $\dim  \Gamma >n$, then $\mbox{rank }AG(r)<n$ due to the size of $AG(r)$, so the line ${(\xi,\eta)}$ possesses a focal point (namely $c_r$) for any value of $r$. Conversely if $\dim \Gamma \leq n$ then 
    \[
    \mbox{rank }AG(r) < n \iff \text{all } n \times n \text{ minors of } AG(r) \text{ vanish.}
    \]
    Each minor is a polynomial in $r$ of at most degree $n$ so has at most $n$ real roots.
\end{proof}
\begin{Rem}
    A focal point exists on a given line iff
    $\binom{2n-{\dim \Gamma}}{n}$ 
     polynomials (the number of minors in the above proof) have a common solution. This number is decreasing in $\dim \Gamma$ and for $\dim \Gamma=n$ results in a single polynomial equation for $r$. Hence focal points become less common the smaller $\dim \Gamma$ is relative to $n$.
\end{Rem}
\section{Lines Normal to Submanifolds}
\label{sec: lines normal to submanifolds}
\begin{Def}\label{def: orthogonal surface}
    Let $\Gamma\subseteq T\S^n$ and $\Sigma\subset\mathbb{R}^{n+1}$ be immersed submanifolds. $\Sigma$ is called an \emph{orthogonal submanifold of} $\Gamma$ if $\dim \Sigma>0$ and there exists a smooth unit normal section $\nu\in\Gamma (N\Sigma)$ such that $\Phi_\nu(\Sigma)\subseteq\Gamma$, where $\Phi_\nu$ is given in (\ref{equ: definition of Phi}).
\end{Def}
The next proposition shows that when $\Gamma$ is isotropic, dimension $\dim \Gamma$ orthogonal submanifolds of $\Gamma$ exist and pass through every point of $\Psi(\Gamma\times\mathbb{R})$ which is not focal.
 \begin{Prop} \label{prop: isotropic implies existance of orthogonal sub manifolds}
Let $\Gamma\subseteq T\mathbb{S}^n$ be isotropic and $\dim \Gamma>0$. Suppose that $p\in\Psi(\Gamma\times\mathbb{R})$ is not a focal point. Then $p$ is contained in an orthogonal submanifold of $\Gamma$ of dimension $\dim \Gamma$ which takes the form
\[
\Sigma_0:=\Psi\Big(\Big\{((\xi,\eta),F(\xi,\eta)): (\xi,\eta)\in W\Big\}\Big),
\]
with $W\subset \Gamma$ open and $F:W\to \mathbb{R}$ a smooth function such that $\theta=\td F$ and $F(\xi_0,\eta_0)=r_0$ where $p=\Psi((\xi_0,\eta_0),r_0)$ and $\theta$ is the Liouville 1-form (\ref{eq:louiville form in terms of round metric}).
\end{Prop}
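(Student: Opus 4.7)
My plan is to build the orthogonal submanifold as the graph of a carefully chosen function $F$ pushed forward by $\Psi$, using isotropy to produce $F$ and the non-focal hypothesis to guarantee that the push-forward is an immersion. The starting observation is that since $\Omega = -\td\theta$, the condition $\Omega|_{\Gamma} = 0$ says precisely that $\theta|_{\Gamma}$ is closed. On a sufficiently small open neighbourhood $W \subseteq \Gamma$ of $(\xi_0,\eta_0)$ (which I shrink to be contractible), the Poincar\'e lemma then produces a smooth $F : W \to \mathbb{R}$ with $\td F = \theta|_{W}$, and by adding a constant I normalise $F(\xi_0,\eta_0) = r_0$.

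With $F$ in hand I define $G : W \to \mathbb{R}^{n+1}$ by $G(\xi,\eta) = \Psi((\xi,\eta), F(\xi,\eta))$, so that $\Sigma_0 = G(W)$ is the candidate submanifold and $p = G(\xi_0,\eta_0)$. Applying Lemma \ref{Lem: decomposition of psi} with $Y = X + \td F(X)\,\partial_r$ gives
\[
\td G_{(\xi,\eta)}(X) = F(\xi,\eta)\,\td\pi(X) + \mathcal{K}(X) + \bigl(\td F(X) - \theta(X)\bigr)\xi = F(\xi,\eta)\,\td\pi(X) + \mathcal{K}(X),
\]
since $\td F = \theta$ on $\Gamma$. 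At $(\xi_0,\eta_0)$ this reduces to $r_0\,\td\pi(X) + \mathcal{K}(X)$, and the non-focal hypothesis, via Proposition \ref{prop: characterisation of focal points}, says this expression vanishes only when $X = 0$. Hence $\td G$ is injective at $(\xi_0,\eta_0)$; shrinking $W$ if necessary, $G$ is an immersion on all of $W$, so $\Sigma_0$ is an immersed submanifold of dimension $\dim\Gamma$ containing $p$.

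It remains to produce the unit normal section and verify orthogonality. The natural choice is $\nu(G(\xi,\eta)) := \xi$: since both $\td\pi(X)$ and $\mathcal{K}(X)$ lie in $T_\xi\S^n$ and are therefore perpendicular to $\xi$, the computation above immediately gives $\nu \perp \td G(T_{(\xi,\eta)}\Gamma) = T_{G(\xi,\eta)}\Sigma_0$. Finally, since $\eta \perp \xi$ we have $\langle G(\xi,\eta),\xi\rangle = F(\xi,\eta)$, so that
\[
\Phi_\nu\bigl(G(\xi,\eta)\bigr) = \bigl(\xi,\, G(\xi,\eta) - F(\xi,\eta)\xi\bigr) = (\xi,\eta) \in W \subseteq \Gamma,
\]
which is exactly the defining condition of an orthogonal submanifold. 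The only subtle point I anticipate is the well-definedness of $\nu$ on the image $\Sigma_0$ (as opposed to the domain $W$): this is handled by shrinking $W$ so that $G$ is an embedding, which is possible at the immersion point $(\xi_0,\eta_0)$, or by interpreting $\nu$ as a section of the normal bundle along the immersion $G$. Everything else is a direct application of Lemma \ref{Lem: decomposition of psi} and Proposition \ref{prop: characterisation of focal points}.
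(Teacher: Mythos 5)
Your proof is correct and follows essentially the same path as the paper's: Poincar\'e lemma on a contractible neighbourhood to produce $F$ with $\td F = \theta|_W$, define $\Sigma_0 = \Psi(\operatorname{graph}F)$, use the non-focal hypothesis together with Lemma \ref{Lem: decomposition of psi} and Proposition \ref{prop: characterisation of focal points} to get an immersion (after shrinking $W$), then take $\nu(G(\xi,\eta)) = \xi$ and verify normality and $\Phi_\nu \circ G = \operatorname{id}_W$. The only cosmetic difference is that you compute $\td G(X) = F\,\td\pi(X) + \mathcal{K}(X)$ and invoke Proposition \ref{prop: characterisation of focal points} directly to get injectivity, whereas the paper phrases the same fact as $\Psi|_{\Gamma\times\mathbb{R}}$ being an immersion and hence restricting to an immersion on the graph.
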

\begin{proof}
Let $p=\Psi((\xi_0,\eta_0),r_0)$. Since $\Gamma$ is isotropic, $\theta|_{\Gamma}$ is locally exact, so there exists a neighbourhood $W\subseteq\Gamma$ of $(\xi_0,\eta_0)$ and a smooth function $F:W\to\mathbb{R}$ with $\theta=\td F$. Without loss of generality assume $F(\xi_0,\eta_0)=r_0$. Define $\Sigma_0=\Psi(\text{graph}(F))$ where 
\begin{align*}
 \mbox{graph}(F)=\Big\{\big((\xi,\eta),F(\xi,\eta)\big):(\xi,\eta)\in W\Big\}.
\end{align*}

As $p$ is not focal, $\Psi$ is a local diffeomorphism near $((\xi_0,\eta_0),F(\xi_0,\eta_0))$ hence $\Psi|_{\text{graph}(F)}$ is a diffeomorphism onto $S_0$ (after possibly shrinking $W$). Thus $\Sigma_0$ is immersed and $\dim \Sigma_0=\dim\Gamma$. To show $\Sigma_0$ satisfies Definition \ref{def: orthogonal surface}, if $q\in \Sigma_0$, there exists a unique $(\xi_q,\eta_q)$ such that $q=\Psi(\xi_q,\eta_q,F(\xi_q,\eta_q))$, and the map $q\mapsto\xi_q$ is smooth. Define the unit section $\nu:q\mapsto\xi_q$. It follows that $\Phi_\nu(\Sigma_0)=W\subseteq\Gamma$. Also $\xi_q$ is normal to $T_q\Sigma_0$;
\begin{align*}
T_q\Sigma_0&=\td\Psi(T_{(\xi_q,\eta_q,F(\xi_q,\eta_q))}\text{graph}(F))\\
&=\{F(\xi_q,\eta_q)\td\pi(X)+\mathcal{K}(X) + (\td F(X)-\theta(X))\xi_q : X\in T_{(\xi_q,\eta_q)}\Gamma\}\\
&=\{F(\xi_q,\eta_q)\td\pi(X)+\mathcal{K}(X) : X\in T_{(\xi_q,\eta_q)}\Gamma\},
\end{align*}
the second line following by Lemma \ref{Lem: decomposition of psi}. Hence $\xi_q\in N_q\Sigma_0$ and thus $\nu\in\Gamma(N\Sigma_0)$.
\end{proof}
\begin{Cor}\label{cor: gamma simply connected and orthogonal surf}
    If $\Gamma$ is assumed simply connected, we may take $W=\Gamma$ so that a submanifold of $\R^{n+1}$ exists to which $\Gamma$ is globally orthogonal.
\end{Cor}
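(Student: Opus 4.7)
The plan is to promote the local primitive $F$ from Proposition~\ref{prop: isotropic implies existance of orthogonal sub manifolds} to a global one by invoking the de~Rham theorem on $\Gamma$, and then to reuse the pointwise computation already established in that proof. Because $\Gamma$ is isotropic we have $\Omega|_\Gamma = 0$, and since $\Omega = -\td\theta$ the pullback $\theta|_\Gamma$ is a closed $1$-form on $\Gamma$. Simple connectedness gives $H^1_{\mathrm{dR}}(\Gamma)=0$, so every closed $1$-form on $\Gamma$ is exact; hence there exists a smooth $F:\Gamma\to\R$ with $\td F = \theta|_\Gamma$, normalised (after adding a constant) so that $F(\xi_0,\eta_0)=r_0$ for the chosen base point.

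With this global $F$ in hand, I would define the smooth map $\widetilde{\Psi}:\Gamma\to \R^{n+1}$ by $\widetilde{\Psi}(\xi,\eta):=\Psi\bigl((\xi,\eta),F(\xi,\eta)\bigr)$ and take $\Sigma_0:=\widetilde{\Psi}(\Gamma)$. The local argument in the proof of Proposition~\ref{prop: isotropic implies existance of orthogonal sub manifolds} now runs verbatim on a neighbourhood of every non-focal point $(\xi,\eta)\in\Gamma$: $\widetilde{\Psi}$ is a local diffeomorphism onto its image there, and the decomposition of $\td\Psi$ supplied by Lemma~\ref{Lem: decomposition of psi}, together with the identity $\td F = \theta$, shows that the tangent space $T_q\Sigma_0$ is contained in $\xi_q^\perp$. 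Setting $\nu(q):=\xi_q$ therefore defines a smooth unit normal section along the non-focal locus of $\Sigma_0$ with $\Phi_\nu(\Sigma_0)\subseteq\Gamma$, fulfilling Definition~\ref{def: orthogonal surface} globally.

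The main obstacle, and the only genuinely new point beyond the preceding proposition, is the topological one: justifying that the primitive extends over all of $\Gamma$. This is precisely where simple connectedness is used, via the standard ``integrate $\theta$ along a path'' construction, which is well-defined because any two paths between fixed endpoints are homotopic and $\td\theta|_\Gamma=0$ makes the integral homotopy-invariant. Once $F$ is global, the submanifold structure of $\Sigma_0$ and the orthogonality of $\Gamma$ to it on the non-focal locus are immediate consequences of the already-established local proof, so no further computation is required.
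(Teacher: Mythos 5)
Your proposal is correct and coincides with what the paper leaves implicit: the only place the proof of Proposition~\ref{prop: isotropic implies existance of orthogonal sub manifolds} shrinks the neighbourhood $W$ for topological reasons is to invoke the Poincar\'e lemma for the closed form $\theta|_\Gamma$, and simple connectedness (via $H^1_{\mathrm{dR}}(\Gamma)=0$, or equivalently homotopy invariance of the path integral of a closed form) removes that restriction, yielding a global primitive $F$ with $\td F=\theta|_\Gamma$. The remainder of the argument — that $\Psi$ restricted to the graph of $F$ is a local diffeomorphism at non-focal points and that $\xi_q$ is normal to the resulting image — is the same pointwise computation as in the proposition, exactly as you say. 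Your care in phrasing the conclusion over the non-focal locus is also appropriate, since the immersion property of $\Psi|_{\mathrm{graph}(F)}$ is a separate condition not supplied by $\pi_1(\Gamma)=0$ and is inherited from the proposition's standing non-focal hypothesis.
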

Among orthogonal submanifolds of $\Gamma$, those such as $\Sigma_0$ are `maximal' since other orthogonal submanifolds are contained in them - at least locally.
\begin{Thm}[Structure Theorem for Orthogonal Submanifolds]\label{Thm: structure theorem for orthogonal submanifolds}
    Let $\Gamma\subset T\S^n$ be isotropic and $\mathcal{U}$ an orthogonal submanifold of $\Gamma$ without focal points. For all $p\in \mathcal{U}$ there exists $V_p$, an open neighbourhood of $p$ in $\mathcal{U}$ such that $V_p\subset \Sigma_p$, where
    \[
    \Sigma_p:=\Psi\Big(\Big\{((\xi,\eta),F(\xi,\eta)): (\xi,\eta)\in W\Big\}\Big),
    \]
    with $W\subseteq\Gamma$ open in $\Gamma$ and $F:W\to \mathbb{R}$ a smooth function such that $\theta|_W=\td F$.
\end{Thm}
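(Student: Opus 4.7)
The plan is to show that $\mathcal{U}$ is locally the $\Psi$-image of the graph of the signed-distance function $\rho(q):=\langle q,\nu(q)\rangle$ along the orthogonal lines, and that this function coincides with the local primitive $F$ of $\theta|_\Gamma$ supplied by Proposition \ref{prop: isotropic implies existance of orthogonal sub manifolds}.

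\textbf{Setup.} Fix $p\in\mathcal{U}$ and let $\nu\in\Gamma(N\mathcal{U})$ be the unit normal section of Definition \ref{def: orthogonal surface} satisfying $\Phi_\nu(\mathcal{U})\subseteq\Gamma$. Put $(\xi_0,\eta_0):=\Phi_\nu(p)$ and $r_0:=\langle p,\nu(p)\rangle$, so that $p=\Psi((\xi_0,\eta_0),r_0)$ and $p$ is not a focal point of $\Gamma$ by hypothesis. Proposition \ref{prop: isotropic implies existance of orthogonal sub manifolds} then supplies an open $W\subseteq\Gamma$ about $(\xi_0,\eta_0)$ and a smooth $F:W\to\R$ with $\theta|_W=\td F$ and $F(\xi_0,\eta_0)=r_0$; define $\Sigma_p$ by the formula in the statement, and let $V_p\subseteq\mathcal{U}$ be a connected open neighbourhood of $p$ with $\Phi_\nu(V_p)\subseteq W$.

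\textbf{Reduction to a differential identity.} By the very definitions of $\Phi_\nu$ and $\Psi$, every $q\in\mathcal{U}$ satisfies $q=\Psi(\Phi_\nu(q),\rho(q))$. Hence the inclusion $V_p\subseteq\Sigma_p$ is equivalent to the pointwise identity $\rho=F\circ\Phi_\nu$ on $V_p$. Both sides equal $r_0$ at $p$, so by connectedness of $V_p$ it suffices to prove $\td\rho=\td(F\circ\Phi_\nu)=\Phi_\nu^\ast\theta$ on $V_p$.

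\textbf{Key computation.} For $X\in T_q\mathcal{U}$, direct differentiation of $\rho(q)=\langle q,\nu(q)\rangle$ yields
\[
\td\rho_q(X)=\langle X,\nu(q)\rangle+\langle q,\td\nu_q(X)\rangle=\langle q,\td\nu_q(X)\rangle,
\]
since $X\perp\nu(q)$. On the other hand, $\pi\circ\Phi_\nu=\nu$, so inserting this into (\ref{eq:louiville form in terms of round metric}), together with $\eta_q=q-\rho(q)\nu(q)$ and $\td\nu_q(X)\perp\nu(q)$ (as $|\nu|^2\equiv 1$), gives
\[
(\Phi_\nu^\ast\theta)_q(X)=\langle \td\nu_q(X),\eta_q\rangle=\langle \td\nu_q(X),q\rangle,
\]
matching $\td\rho_q(X)$ and closing the argument.

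\textbf{Expected obstacle.} No deep geometric difficulty is anticipated; the crux is clean bookkeeping of the pullback $\Phi_\nu^\ast\theta$, since $\Phi_\nu$ encodes both a direction and a closest-point vector, while $\theta$ is written in the horizontal/vertical splitting of $TT\S^n$. Respecting the identification of $T\S^n$ with $\bbL(\R^{n+1})$ throughout, the identity $\td\rho=\Phi_\nu^\ast\theta$ then falls out as a direct calculation.
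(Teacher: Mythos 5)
Your proof is correct and follows essentially the same route as the paper's: both introduce the signed-distance function $\rho(q)=\langle q,\nu(q)\rangle$ (called $r$ in the paper), reduce the inclusion $V_p\subseteq\Sigma_p$ to the identity $\rho=F\circ\Phi_\nu$, and establish this by showing $\td\rho=\Phi_\nu^\ast\theta$ via Lemma \ref{Lem: derivative of Phi_v} and the formula (\ref{eq:louiville form in terms of round metric}). The calculations match line for line, so no substantive differences.
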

Before proving this we first prove a lemma concerning the map (\ref{equ: definition of Phi}).
\begin{Lem}\label{Lem: derivative of Phi_v}
    Let $S\subset \R^{n+1}$ be immersed and $v$ a unit section over $S$. For all $X\in T_pS$ the map $\td\Phi_v:TS\to TT\S^n$ satisfies
    \begin{equation}\label{eq: derivative of Phi_v}
    \begin{aligned}
        &\td\pi(\td\Phi_v(X))=\overline\nabla_Xv(p),\\ &\mathcal{K}(\td\Phi_v(X))=X-\langle X,v(p)\rangle v(p)-\langle p, v(p)\rangle \overline\nabla_X v(p),
    \end{aligned}
    \end{equation}
    and $\ker \td\Phi_v|_p \subseteq \mbox{Span}\{v(p)\}$. Furthermore if $v(p)\notin T_pS$, then $\td\Phi_v|_p$ is injective.
\end{Lem}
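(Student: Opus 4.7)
The plan is to compute $\td\Phi_v(X)$ directly by picking a curve $\gamma:(-\varepsilon,\varepsilon)\to S$ with $\gamma(0)=p$ and $\gamma'(0)=X$, writing $\Phi_v(\gamma(t))=(\xi(t),\eta(t))$ with $\xi(t)=v(\gamma(t))$ and $\eta(t)=\gamma(t)-\langle v(\gamma(t)),\gamma(t)\rangle\,v(\gamma(t))$, and then extracting the horizontal and vertical components using $\td\pi$ and the intrinsic formula for $\mathcal{K}$ that already appeared in the proof of Lemma \ref{Lem: decomposition of psi}.

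First I would handle the horizontal component. Since $\pi(\xi,\eta)=\xi$, the chain rule gives $\td\pi(\td\Phi_v(X))=\xi'(0)$. Viewing $v$ as a smooth map $S\to\R^{n+1}$ and using the flat connection $\overline\nabla$, this is exactly $\overline\nabla_Xv(p)$, yielding the first equation of (\ref{eq: derivative of Phi_v}). Next I would compute the vertical component. A straightforward product-rule differentiation of $\eta(t)$ at $t=0$ gives
\[
\eta'(0)=X-\langle\overline\nabla_Xv,p\rangle v(p)-\langle v(p),X\rangle v(p)-\langle v(p),p\rangle\,\overline\nabla_Xv.
\]
From the proof of Lemma \ref{Lem: decomposition of psi} we have $\mathcal{K}(\td\Phi_v(X))=\eta'(0)+\langle\eta(0),\xi'(0)\rangle v(p)$. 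A direct expansion of $\langle\eta(0),\xi'(0)\rangle=\langle p-\langle p,v(p)\rangle v(p),\overline\nabla_Xv\rangle$ simplifies to $\langle p,\overline\nabla_Xv\rangle$ using $\langle v(p),\overline\nabla_Xv\rangle=0$ (which follows from $|v|\equiv 1$). Plugging in and cancelling the $\langle\overline\nabla_Xv,p\rangle v(p)$ terms produces the second equation of (\ref{eq: derivative of Phi_v}).

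Finally, for the kernel claim, I would suppose $X\in\ker\td\Phi_v|_p$ and read off both components: $\td\pi=0$ forces $\overline\nabla_Xv(p)=0$, and substituting this into the expression for $\mathcal{K}$ reduces it to $X-\langle X,v(p)\rangle v(p)=0$. This means $X$ is a scalar multiple of $v(p)$, proving $\ker\td\Phi_v|_p\subseteq\mathrm{Span}\{v(p)\}$. Under the extra hypothesis $v(p)\notin T_pS$, the intersection $T_pS\cap\mathrm{Span}\{v(p)\}$ is trivial, so $X=0$ and $\td\Phi_v|_p$ is injective.

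The computation is essentially bookkeeping, with the only mild subtlety being to keep track of the $\xi$-component that the connection map subtracts off; this is why the $\langle\overline\nabla_Xv,p\rangle v(p)$ terms from $\eta'(0)$ and from the correction $\langle\eta(0),\xi'(0)\rangle v(p)$ must cancel before one recovers the clean form in (\ref{eq: derivative of Phi_v}). There is no deeper obstacle.
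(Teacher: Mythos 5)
Your proposal is correct and follows essentially the same route as the paper's proof: pick a curve, read off the horizontal part as $\xi'(0)=\overline\nabla_Xv(p)$, and compute the vertical part by differentiating $\eta(t)$ and projecting onto $v(p)^\perp$. The only cosmetic difference is that the paper applies the tangential projection $(\,\cdot\,)^\top$ directly to $\eta'(0)$, whereas you route through the identity $\mathcal{K}=\eta'(0)+\langle\eta(0),\xi'(0)\rangle\,\xi(0)$ from the proof of Lemma~\ref{Lem: decomposition of psi} and then cancel the two $\langle p,\overline\nabla_Xv\rangle v(p)$ terms; these are the same projection written two ways.
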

\begin{proof}
    Let $\alpha(t)$ be a curve in $S$ such that $\alpha'(0)=X$. Then  
\[
(\Phi_v\circ \alpha)(t)=\big((v\circ\alpha)(t),\alpha(t)-\langle\alpha(t),(v\circ\alpha)(t)\rangle (v\circ\alpha)(t)\big),
\]
is a curve in $T\mathbb{S}^n$ such that $(\pi\circ\Phi_v\circ\alpha)(t)=(v\circ\alpha)(t)$, and $\td\pi(\td\Phi_v(X))=\overline\nabla_Xv(p)$ follows after differentiating. The second expression follows from taking the covariant derivative of $\alpha(t)-\langle\alpha(t),(v\circ\alpha)(t)\rangle (v\circ\alpha)(t)$ with respect to $D$ along the curve $(v\circ\alpha)(t)$ in $\S^n$.  If `$\top$' denotes the projection of $T_{v(p)}\mathbb{R}^{n+1}$ onto $T_{v(p)}\S^n$;
\begin{align*}
    \mathcal{K}(\td\Phi_v(X))&=\left.\frac{D}{\td t}\left(\alpha(t)-\langle\alpha(t),(v\circ\alpha)(t)\rangle (v\circ\alpha)(t)\right)\right|_{t=0}\\
    &=\left.\left(\dv{}{t}\Big[ \alpha(t)-\langle\alpha(t),(v\circ\alpha)(t)\rangle (v\circ\alpha)(t)\Big]\right|_{t=0}\right)^\top\\
    &=\left(X-\langle X,v(p)\rangle-\langle p,\overline\nabla_Xv(p)\rangle v(p)-\langle p,v(p)\rangle\overline\nabla_Xv(p)\right)^\top\\
    &=X-\langle X,v(p)\rangle v(p)-\langle p, v(p)\rangle \overline\nabla_X v(p).
\end{align*}
To prove the last claims, let $X\in\ker \td\Phi|_p$. Then $\td\Phi(X)=0$ implying $\td\pi(\td\Phi_v(X))=\mathcal{K}(\td\Phi_v(X))=0$. This implies $X=\langle X,v(p)\rangle v(p)$ from the first part of the lemma.
\end{proof}
\begin{proof}[Proof of Theorem \ref{Thm: structure theorem for orthogonal submanifolds}]
    Let $\mathcal{U}$ be an orthogonal submanifold of $\Gamma$ and let $\nu\in\Gamma(N\mathcal{U})$ be unit such that $\Phi_\nu(\mathcal{U})\subseteq \Gamma$. Fix $p\in \mathcal{U}$. Since $\nu(p)\in N_p\mathcal{U}$, the last part of Lemma \ref{Lem: derivative of Phi_v} implies that for some open neighbourhood $V_p\subset \mathcal{U}$ of $p$,  $\Phi_\nu|_{V}$ is a diffeomorphism onto its image.  Define the smooth function
    \begin{align*}
        &r:~\mathcal{U}\to\mathbb{R}, &&q \mapsto \langle q, \nu(q)\rangle.
    \end{align*}
   By the definitions of $\Psi$ and $\Phi_\nu$ one sees that
    $q=\Psi(\Phi_\nu(q),r(q))$ for all $q\in \mathcal{U}$. Hence
    \[
    V_p=\Psi\Big(\Big\{(\Phi_\nu(q),r(q)): q\in V_p\Big\}\Big)=\Psi\Big(\Big\{\big((\xi,\eta),(r\circ\Phi_\nu^{-1})(\xi,\eta)\big): (\xi,\eta)\in \Phi_\nu(V_p)\Big\}\Big).
    \]
    On the other hand, since $\Gamma$ is isotropic and $p$ is not focal, Proposition \ref{prop: isotropic implies existance of orthogonal sub manifolds},
    \[
    \Sigma_p:=\Psi\Big(\Big\{((\xi,\eta),F(\xi,\eta)): (\xi,\eta)\in W\Big\}\Big),
    \]
    is a dimension $\dim \Gamma$ orthogonal submanifold of $\Gamma$ containing $p$, with $\Phi_\nu(p)\in W\subset \Gamma$ open and $F:W\to\mathbb{R}$ a smooth function such that $\td F=\theta$. By shrinking $V_p$ if required we may assume that $\Phi_\nu(V_p)\subset W$. Note further that if $X\in T_q\mathcal{U}$, then
    \begin{align*}
        \td r(X)=X(r)&=\langle X, \nu(q) \rangle + \langle q, \overline\nabla_X \nu(q)\rangle,\\
        &=\langle q, \overline\nabla_X \nu(q)\rangle, &&\text{(Since $X\perp \nu(p)$)}\\
        &=\langle q-\langle q,v(q)\rangle v(q), \overline\nabla_X v(q)\rangle, &&\text{(Since $\nabla_X\nu(p)\perp \nu(p)$)}\\
        &=\langle q-\langle q,v(q)\rangle v(q), \td\pi(\td\Phi_\nu(X))\rangle, &&\text{(By Lemma \ref{Lem: derivative of Phi_v})}\\
        &=\Phi_\nu^\ast\theta(X), &&\text{(From equation (\ref{eq:louiville form in terms of round metric}))}\\
        &=\td(\Phi_\nu^\ast F)(X). &&\text{(By definition of $F$)}
    \end{align*}
    Hence $r=\Phi_\nu^\ast (F|_{\Phi_\nu(V)})$, since we may choose $F$ to satisfy $F(\Phi_\nu(p))=r(p)$. Thus
    \begin{align*}
        V_p=\Psi\Big(\Big\{((\xi,\eta),F(\xi,\eta)):(\xi,\eta)\in \Phi_\nu(V_p)\Big\}\Big)
        \subseteq \Psi\Big(\Big\{((\xi,\eta),F(\xi,\eta)): (\xi,\eta)\in W\Big\}\Big)
        =\Sigma_0.
    \end{align*}
\end{proof}
We now show that the existence of orthogonal submanifolds implies isotropic.
\begin{Lem} \label{Lem: pullback of omega} Let $S\subset\R^{n+1}$, $\Phi_v(S)\subset T\S^n$ be immersed with $v$ a unit section over $S$. For $X,Y\in T_pS$
\[
(\Phi_v^\ast\Omega)_p(X,Y)=\langle \overline\nabla_X v(p), Y \rangle-\langle \overline\nabla_Y v(p), X \rangle.
\]
\end{Lem}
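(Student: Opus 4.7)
The plan is to expand $(\Phi_v^\ast\Omega)_p(X,Y) = \Omega_{\Phi_v(p)}(\td\Phi_v(X),\td\Phi_v(Y))$ using the two tools immediately at hand: the coordinate-free formula
\[
\Omega(X',Y') = g(\td\pi(X'),\mathcal{K}(Y')) - g(\td\pi(Y'),\mathcal{K}(X'))
\]
from equation (\ref{e:symplectic_structure}), together with the expressions for $\td\pi\circ\td\Phi_v$ and $\mathcal{K}\circ\td\Phi_v$ provided by Lemma \ref{Lem: derivative of Phi_v}. Substituting directly gives
\[
(\Phi_v^\ast\Omega)_p(X,Y) = g\bigl(\overline\nabla_X v,\,Y - \langle Y,v\rangle v - \langle p,v\rangle\overline\nabla_Y v\bigr) - g\bigl(\overline\nabla_Y v,\,X - \langle X,v\rangle v - \langle p,v\rangle\overline\nabla_X v\bigr).
\]

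Next, I would use that $g$ is the round metric on $\S^n$, which at the point $v(p)$ agrees with the Euclidean inner product $\langle\cdot,\cdot\rangle$ on $T_{v(p)}\S^n$. Since $|v|\equiv 1$ we have $\langle \overline\nabla_X v,v\rangle = \langle \overline\nabla_Y v,v\rangle = 0$, so the terms involving $\langle Y,v\rangle v$ and $\langle X,v\rangle v$ drop out. This leaves
\[
(\Phi_v^\ast\Omega)_p(X,Y) = \langle \overline\nabla_X v, Y\rangle - \langle\overline\nabla_Y v, X\rangle - \langle p,v\rangle\bigl(\langle\overline\nabla_X v,\overline\nabla_Y v\rangle - \langle\overline\nabla_Y v,\overline\nabla_X v\rangle\bigr).
\]

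The final step is to observe that the bracket multiplying $\langle p,v\rangle$ vanishes by symmetry of $\langle\cdot,\cdot\rangle$, yielding the claimed identity. There is no real obstacle here; the only point that requires any care is the check that $\overline\nabla_X v$ is genuinely tangent to $\S^n$ at $v(p)$ so that $g$ may be replaced by the Euclidean inner product, and that the non-tangential components $\langle Y,v\rangle v$ appearing in the expression for $\mathcal{K}(\td\Phi_v(Y))$ are killed by pairing against $\overline\nabla_X v$. Both follow from $|v|\equiv 1$.
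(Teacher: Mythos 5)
Your proposal is correct and follows essentially the same route as the paper: expand $\Phi_v^\ast\Omega$ via equation (\ref{e:symplectic_structure}), substitute the formulas for $\td\pi\circ\td\Phi_v$ and $\mathcal{K}\circ\td\Phi_v$ from Lemma \ref{Lem: derivative of Phi_v}, kill the $\langle \cdot,v\rangle v$ terms using $|v|\equiv 1$, and observe the $\langle p,v\rangle$-terms cancel by symmetry. The paper carries out the same computation, simply leaving the final cancellation implicit under ``a similar calculation with $X$ and $Y$ exchanged.''
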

\begin{proof}
If $p\in S$ and $X,Y\in T_pS$ then equation (\ref{e:symplectic_structure}) gives
    \begin{align*}
    (\Phi_v^\ast \Omega)(X,Y)=g\Big(\td\pi(\td\Phi_v(X)),\mathcal{K}(\td\Phi_v(Y))\Big)-g\Big(\td\pi(\td\Phi_v(Y)),\mathcal{K}(\td\Phi_v(X))\big).
    \end{align*}
    Using Lemma \ref{Lem: derivative of Phi_v} gives
    \begin{align*}
    g\Big(\td\pi(\td\Phi_v(X)),\mathcal{K}(\td\Phi_v(Y))\Big)&=g\Big(\overline\nabla_Xv(p), Y-\langle v(p),Y\rangle v(p)-\langle v(p),p\rangle \overline\nabla_Y v(p) \Big)\\&=\langle \overline\nabla_X v(p), Y \rangle-\langle v(p),p\rangle \langle \overline\nabla_X v(p),\overline\nabla_Y v(p) \rangle.
    \end{align*}
    A similar calculation with $X$ and $Y$ exchanged leads to the formula
    \begin{align*}
    (\Phi_v^\ast \Omega)_p(X,Y)&=\langle \overline\nabla_X v(p), Y \rangle-\langle \overline\nabla_Y v(p), X \rangle.
    \end{align*}
\end{proof}
For the rest of this section we use the notation $(\Sigma,\nu)$ to denote a pair such that $\Sigma\subset\R^{n+1}$ is immersed and $\nu\in\Gamma(N\Sigma)$ is unit.
\begin{Prop} \label{prop: orthogonal implies isotropic}
    Assume $\Phi_\nu(\Sigma)\in T\S^n$ is immersed and $\dim \Sigma>0$. Then $\Sigma$ is an orthogonal submanifold of $\Phi_\nu(\Sigma)$ and $\Phi_\nu(\Sigma)$ is isotropic.
\end{Prop}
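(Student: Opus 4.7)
The first assertion is essentially a restatement of the hypothesis: $\nu$ is by hypothesis a smooth unit normal section on $\Sigma$, and $\Phi_\nu(\Sigma)\subseteq\Phi_\nu(\Sigma)$ trivially, so Definition \ref{def: orthogonal surface} applies at once (modulo noting $\dim\Sigma>0$, which is assumed). The content of the proposition is therefore the second assertion, that $\Phi_\nu(\Sigma)$ is isotropic, i.e.\ $\Omega$ vanishes on every pair of tangent vectors to the immersed image.

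My strategy is to pull $\Omega$ back to $\Sigma$ via $\Phi_\nu$ and apply Lemma \ref{Lem: pullback of omega}. Before doing so I would first observe that $\Phi_\nu$ is itself an immersion: since $\nu(p)\in N_p\Sigma$ and $\nu$ is unit, in particular $\nu(p)\notin T_p\Sigma$, so the last sentence of Lemma \ref{Lem: derivative of Phi_v} gives injectivity of $\td\Phi_\nu|_p$. Hence $\td\Phi_\nu$ maps $T_p\Sigma$ isomorphically onto $T_{\Phi_\nu(p)}\Phi_\nu(\Sigma)$, so vanishing of $\Phi_\nu^\ast\Omega$ on $T\Sigma$ is equivalent to vanishing of $\Omega$ on $T\Phi_\nu(\Sigma)$.

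Fix $p\in\Sigma$ and $X,Y\in T_p\Sigma$. Lemma \ref{Lem: pullback of omega} gives
\[
(\Phi_\nu^\ast\Omega)_p(X,Y)=\langle\overline\nabla_X\nu(p),Y\rangle-\langle\overline\nabla_Y\nu(p),X\rangle.
\]
The key step is then to show that the right-hand side is symmetric in $X,Y$ (hence zero). I would extend $X,Y$ to smooth tangent vector fields on $\Sigma$ near $p$ and differentiate the identities $\langle\nu,X\rangle\equiv 0$ and $\langle\nu,Y\rangle\equiv 0$ to obtain
\[
\langle\overline\nabla_Y\nu,X\rangle=-\langle\nu,\overline\nabla_Y X\rangle,\qquad \langle\overline\nabla_X\nu,Y\rangle=-\langle\nu,\overline\nabla_X Y\rangle.
\]
Subtracting yields
\[
\langle\overline\nabla_X\nu,Y\rangle-\langle\overline\nabla_Y\nu,X\rangle=-\langle\nu,\overline\nabla_X Y-\overline\nabla_Y X\rangle=-\langle\nu,[X,Y]\rangle,
\]
and since $X,Y$ are tangent to $\Sigma$ so is the Lie bracket $[X,Y]$, which is therefore orthogonal to $\nu$. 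Hence $(\Phi_\nu^\ast\Omega)_p(X,Y)=0$ for all $X,Y\in T_p\Sigma$, and $\Phi_\nu(\Sigma)$ is isotropic.

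There is no serious obstacle. The only point requiring care is the passage from tangent vectors at a point to tangent vector fields in order to invoke the Lie bracket identity; this is legitimate because $(\Phi_\nu^\ast\Omega)_p(X,Y)$ depends only on the pointwise values of $X,Y$ at $p$, so we are free to choose any smooth tangent extensions. The argument also makes transparent why the hypothesis that $\nu$ be \emph{normal} is essential: it is exactly what forces the symmetry of the shape-operator-like expression $\langle\overline\nabla_X\nu,Y\rangle$ in its two arguments.
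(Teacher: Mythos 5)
Your proof is correct and takes essentially the same route as the paper: pull back $\Omega$ via Lemma \ref{Lem: pullback of omega}, rewrite the antisymmetrised expression as $-\langle\nu,[X,Y]\rangle$ using the torsion-freeness of $\overline{\nabla}$, and conclude since $[X,Y]$ is tangent to $\Sigma$ and hence orthogonal to $\nu$. The immersion property is also established in both via the last clause of Lemma \ref{Lem: derivative of Phi_v}.
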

\begin{proof}
The first claim holds by definition. For the second, let $X,Y\in T_p\Sigma$. Then
   \begin{align*}
(\Phi_\nu^\ast\Omega)_p(X,Y)&=\langle \overline\nabla_X \nu(p), Y \rangle-\langle \overline\nabla_Y \nu(p), X \rangle &&\text{(By Lemma \ref{Lem: pullback of omega})}\\
&=\langle [X,Y],\nu(p)\rangle &&\text{(Since $\overline{\nabla}$ is torsion free and $\nu(p)\perp T_p\Sigma$)}\\
    &=0. &&\text{(Since $\nu(p)\perp T_p\Sigma$)}
    \end{align*}
    Since $\nu\in\Gamma(N\Sigma)$, $\Phi_\nu$ is an immersion by Lemma \ref{Lem: derivative of Phi_v} and $\Omega|_{\Phi_\nu(\Sigma)}=0$ follows.
\end{proof}
\begin{Rem}
    Proposition \ref{prop: orthogonal implies isotropic} is a converse to Proposition \ref{prop: isotropic implies existance of orthogonal sub manifolds}. Indeed if $\Gamma\subseteq T\S^n$ admits an orthogonal submanifold $\Sigma\subset\mathbb{R}^{n+1}$ with a unit normal field $\nu$, $\Phi_\nu(\Sigma)$ is isotropic by Proposition \ref{prop: orthogonal implies isotropic}. If $\Gamma=\Phi_\nu(\Sigma)$ - i.e. $\Gamma$ is entirely generated by the normal vector field $\nu\in\Gamma(N\Sigma)$ - then $\Gamma$ itself is isotropic.
\end{Rem}
\begin{Prop} \label{Prop: orthgononal submanifolds distance from focal set is radii of curvature}
 Suppose $\Phi_\nu(\Sigma)\subset T\S^n$ is immersed. Let $q\in \Sigma$ and let $p\in\mathbb{R}^{n+1}\backslash\{q\}$ lie on the line $\Phi_\nu(q)$. Then, $p$ is a focal point of $\Phi_\nu(\Sigma)$ iff there exists a non-zero $Y\in T_q\Sigma$ satisfying the eigenvalue equation
 \[
 \langle q-p,\nu(q)\rangle \overline\nabla_Y \nu(q)=Y.
 \]
 In particular, the eigenvalue is the distance along $\nu(q)$ from $\Sigma$ to the focal set of $\Phi_\nu(\Sigma)$. If $\Sigma$ is a hypersurface, then $X\mapsto \overline{\nabla}_X\nu$ is the shape operator of $\Sigma$.
\end{Prop}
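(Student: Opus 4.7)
The plan is to unpack the abstract focal-point condition of Proposition \ref{prop: characterisation of focal points} along the specific tangent vectors coming from the normal line family $\Phi_\nu(\Sigma)$, and then rewrite the resulting equation geometrically. First I would note that since $\nu(q) \in N_q\Sigma$ and in particular $\nu(q)\notin T_q\Sigma$, Lemma \ref{Lem: derivative of Phi_v} implies $\td\Phi_\nu|_q:T_q\Sigma\to T_{\Phi_\nu(q)}\Phi_\nu(\Sigma)$ is a linear isomorphism. Therefore every non-zero tangent vector $X$ at $\Phi_\nu(q)$ is of the form $X=\td\Phi_\nu(Y)$ for a unique non-zero $Y\in T_q\Sigma$, and Proposition \ref{prop: characterisation of focal points} reduces to asking whether there is a non-zero $Y\in T_q\Sigma$ satisfying
\[
r\,\td\pi(\td\Phi_\nu(Y))+\mathcal{K}(\td\Phi_\nu(Y))=0,
\]
where $r$ is the parameter with $p=\Psi(\Phi_\nu(q),r)$.

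Next I would substitute the explicit formulas of Lemma \ref{Lem: derivative of Phi_v}, namely $\td\pi(\td\Phi_\nu(Y))=\overline{\nabla}_Y\nu(q)$ and $\mathcal{K}(\td\Phi_\nu(Y))=Y-\langle Y,\nu(q)\rangle\nu(q)-\langle q,\nu(q)\rangle\overline{\nabla}_Y\nu(q)$. Since $Y\in T_q\Sigma$ and $\nu(q)$ is normal, $\langle Y,\nu(q)\rangle=0$, and the focal condition collapses to
\[
\bigl(r-\langle q,\nu(q)\rangle\bigr)\overline{\nabla}_Y\nu(q)+Y=0.
\]

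The remaining step is to convert the scalar $r-\langle q,\nu(q)\rangle$ into the geometric quantity in the statement. By definition of $\Psi$ and $\Phi_\nu$, the point $p$ equals $q-\langle q,\nu(q)\rangle\nu(q)+r\nu(q)=q+(r-\langle q,\nu(q)\rangle)\nu(q)$, so $\langle q-p,\nu(q)\rangle=\langle q,\nu(q)\rangle-r$. Rearranging the displayed equation therefore yields exactly $\langle q-p,\nu(q)\rangle\overline{\nabla}_Y\nu(q)=Y$, and since $\td\Phi_\nu$ is injective, non-zero $X$ correspond precisely to non-zero $Y$. The same computation shows that the signed length of $q-p$ along $\nu(q)$ is the eigenvalue in the statement, giving the distance interpretation.

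For the final sentence, when $\Sigma$ is a hypersurface one differentiates $\langle\nu,\nu\rangle\equiv 1$ to obtain $\langle\overline{\nabla}_X\nu,\nu\rangle=0$ for every $X\in T_q\Sigma$, so $\overline{\nabla}_X\nu\in T_q\Sigma$ and $X\mapsto\overline{\nabla}_X\nu$ is a well-defined endomorphism of $T_q\Sigma$, which is the (Weingarten) shape operator. No serious obstacle is anticipated; the only care needed is the sign bookkeeping between $r$, $\langle q,\nu(q)\rangle$, and the signed distance $\langle q-p,\nu(q)\rangle$, together with invoking the injectivity of $\td\Phi_\nu$ so that the existence of a non-zero $X$ in the characterisation of focal points is equivalent to the existence of a non-zero $Y\in T_q\Sigma$.
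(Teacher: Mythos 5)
Your proposal is correct and follows essentially the same route as the paper's proof: identify $T_{\Phi_\nu(q)}\Phi_\nu(\Sigma)$ with $T_q\Sigma$ via the injectivity of $\td\Phi_\nu$ (Lemma \ref{Lem: derivative of Phi_v}), substitute the formulas for $\td\pi\circ\td\Phi_\nu$ and $\mathcal{K}\circ\td\Phi_\nu$ into the focal-point condition of Proposition \ref{prop: characterisation of focal points}, simplify using $Y\perp\nu(q)$, and rewrite the scalar $r-\langle q,\nu(q)\rangle$ as $-\langle q-p,\nu(q)\rangle$. The only difference is that you spell out the sign bookkeeping more explicitly than the paper, which in fact helps catch a small typo in the paper's display (where $\nu(p)$ should read $\nu(q)$).
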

\begin{proof}
Let $q\in \Sigma$ and $p\in\mathbb{R}^{n+1}\backslash\{q\}$ belong to the line $\Phi_\nu(q)$. Hence we may write $p=\Psi(\Phi_\nu(q),\langle p,\nu(q)\rangle)$, i.e. $p=q-\langle q-p,\nu(q)\rangle \nu(q)$.
If $Y\in T_q\Sigma$, then by Lemma \ref{Lem: derivative of Phi_v}
\[
\Phi^\ast_\nu\Big(\langle p,\nu(q)\rangle\td\pi+\mathcal{K}\Big)(Y)=\langle p-q,\nu(p)\rangle\overline\nabla_Y\nu(q)+Y,
\]
and $\Phi_\nu$ is an immersion. The claim now follows from Proposition \ref{prop: characterisation of focal points}.
\end{proof}
\section{Lines Tangent to Submanifolds}
\label{sec: lines tangent submanifolds}
Now we consider lines generated by the pair $(S,v)$ when $v$ is a unit tangent vector field over $S$, and again $S\subset\R^{n+1}$ is immersed. In this case we prove that $S$ is a part of the focal set of $\Phi_v(S)$ and provide some characterisations of $\Phi_v(S)$ being isotropic. First a characterisation of when $\Phi_v$ is an immersion is given.
\begin{Lem}\label{lem: dphi is injective iff v is nowhere geodesic}
     The map $\Phi_v$ is an immersion if and only if $v\in\Gamma(TS)$ is nowhere geodesic with respect to the Euclidean connection $\overline\nabla$ (i.e. $\overline\nabla_{v}v(p)\neq 0$ for all $p\in S$).
\end{Lem}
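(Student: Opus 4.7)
The plan is to invoke Lemma \ref{Lem: derivative of Phi_v}, which already does most of the work: it gives explicit formulas for $\td\pi\circ\td\Phi_v$ and $\mathcal{K}\circ\td\Phi_v$ and, crucially, contains the inclusion $\ker\td\Phi_v|_p\subseteq\mbox{Span}\{v(p)\}$. Since $v\in\Gamma(TS)$, the vector $v(p)$ lies in $T_pS$ and is therefore a legitimate input to $\td\Phi_v|_p$. The map $\td\Phi_v|_p$ is injective if and only if $v(p)\notin\ker\td\Phi_v|_p$, i.e.\ if and only if $\td\Phi_v(v(p))\neq 0$. So the whole statement reduces to computing $\td\Phi_v(v(p))$ and showing that it vanishes exactly when $\overline\nabla_vv(p)=0$.

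Plugging $X=v(p)$ into the formulas of Lemma \ref{Lem: derivative of Phi_v} gives
\[
\td\pi(\td\Phi_v(v(p)))=\overline\nabla_{v(p)}v(p)=\overline\nabla_vv(p),
\]
and, using $\langle v(p),v(p)\rangle=1$,
\[
\mathcal{K}(\td\Phi_v(v(p)))=v(p)-v(p)-\langle p,v(p)\rangle\overline\nabla_vv(p)=-\langle p,v(p)\rangle\overline\nabla_vv(p).
\]
Because $TT\S^n\simeq H\S^n\oplus V\S^n$ via $(\td\pi,\mathcal{K})$, one has $\td\Phi_v(v(p))=0$ iff both components vanish. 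Evidently, $\overline\nabla_vv(p)=0$ forces both to vanish; conversely, vanishing of the horizontal component already forces $\overline\nabla_vv(p)=0$, which then automatically kills the vertical component. Hence $\td\Phi_v(v(p))=0$ if and only if $\overline\nabla_vv(p)=0$.

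Combining this with the inclusion $\ker\td\Phi_v|_p\subseteq\mbox{Span}\{v(p)\}$ shows that $\td\Phi_v|_p$ fails to be injective precisely when $\overline\nabla_vv(p)=0$. Running this equivalence over all $p\in S$ yields the claim: $\Phi_v$ is an immersion iff $\overline\nabla_vv$ never vanishes, i.e.\ iff $v$ is nowhere geodesic for $\overline\nabla$. There is no real obstacle here; the only subtlety is noticing that the unit-length condition $|v|=1$ causes the two leading terms of $\mathcal{K}(\td\Phi_v(v(p)))$ to cancel exactly, so that the vertical component is proportional to the horizontal one and the whole question collapses to a single scalar condition.
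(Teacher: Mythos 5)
Your proof is correct and follows essentially the same route as the paper: both reduce the question to the behaviour of $\td\Phi_v$ on $\mbox{Span}\{v(p)\}$ via the inclusion $\ker\td\Phi_v|_p\subseteq\mbox{Span}\{v(p)\}$ from Lemma \ref{Lem: derivative of Phi_v}, and both evaluate the horizontal and vertical components at $X=v(p)$. The only cosmetic difference is that you phrase the argument as ``injectivity iff $\td\Phi_v(v(p))\neq 0$'' and compute that single vector, whereas the paper runs the two implications of the contrapositive separately, taking a general $X\in\ker\td\Phi_v|_p$ in the forward direction and invoking tensoriality to pass from $\overline\nabla_Xv=0$ to $\overline\nabla_vv=0$; the content is the same.
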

\begin{proof}
    We prove the contrapositive, namely for each $p\in S$
    \[
    \td\Phi_v|_p \text{ has non trivial kernel } \iff \overline\nabla_{v}v(p)=0.
    \]

    The backwards implication follows by letting $X=v(p)$ in Lemma \ref{Lem: derivative of Phi_v}. To prove the forward implication let $X\in\mbox{Ker }\td \Phi_v|_p$, with $X\neq 0$. Lemma \ref{Lem: derivative of Phi_v} implies $\overline\nabla_Xv(p)=0$ and $X=\lambda v(p)$ with $\lambda\neq 0$. Since $\overline\nabla$ is tensorial we have $\overline\nabla_{v}v(p)=0$.
\end{proof}
Hence $\Phi_v$ fails to be an immersion precisely when the integral curves of $v$, thought of as curves in $\R^{n+1}$, have zero curvature.
\begin{Prop}\label{prop: submanfolds tangent to the lines are focal}
    Suppose $\Phi_v$ is an immersion and $\Phi_v(S)\subset T\S^n$ is immersed. The focal set of $\Phi_v(S)$ contains $S$.
\end{Prop}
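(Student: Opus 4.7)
The plan is to verify the focal criterion of Proposition \ref{prop: characterisation of focal points} directly at each $q\in S$. Fix $q\in S$ and set $(\xi,\eta):=\Phi_v(q)=(v(q),\,q-\langle q,v(q)\rangle v(q))$. Put $r_0:=\langle q,v(q)\rangle$; then a one-line computation gives $\Psi((\xi,\eta),r_0)=\eta+r_0\xi=q$, so $q$ lies on the line $\Phi_v(q)$ at parameter $r_0$ and is a candidate focal point. By Proposition \ref{prop: characterisation of focal points} it suffices to exhibit a nonzero $X\in T_{(\xi,\eta)}\Phi_v(S)$ with $r_0\,\td\pi(X)+\mathcal{K}(X)=0$.

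The key observation is that the tangent direction $v(q)\in T_qS$ of the vector field itself is the right choice. Since $\Phi_v$ is an immersion, $T_{(\xi,\eta)}\Phi_v(S)=\td\Phi_v(T_qS)$, and $\td\Phi_v(v(q))$ is nonzero. Applying the formulae of Lemma \ref{Lem: derivative of Phi_v} with $Y=v(q)$ yields
\begin{align*}
\td\pi\bigl(\td\Phi_v(v(q))\bigr) &= \overline\nabla_{v(q)} v(q),\\
\mathcal{K}\bigl(\td\Phi_v(v(q))\bigr) &= v(q)-\langle v(q),v(q)\rangle v(q) - \langle q,v(q)\rangle \overline\nabla_{v(q)}v(q) = -r_0\,\overline\nabla_{v(q)}v(q),
\end{align*}
using $|v(q)|=1$. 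Adding $r_0$ times the first to the second gives exactly $0$, which is the focal-point equation of Proposition \ref{prop: characterisation of focal points}.

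Thus $X:=\td\Phi_v(v(q))$ is a nonzero element of $T_{\Phi_v(q)}\Phi_v(S)$ satisfying $r_0\,\td\pi(X)+\mathcal{K}(X)=0$, so $q=\Psi(\Phi_v(q),r_0)$ is a focal point of $\Phi_v(S)$. Since $q\in S$ was arbitrary, $S$ is contained in the focal set of $\Phi_v(S)$. There is no real obstacle here: the two hypotheses $v\in\Gamma(TS)$ (so that $v(q)$ is a legitimate tangent vector of $S$ to be differentiated) and $\Phi_v$ being an immersion (so that $\td\Phi_v(v(q))\neq 0$) are precisely what make both the cancellation and the non-triviality go through.
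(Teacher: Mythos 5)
Your proof is correct and follows essentially the same route as the paper: take $X=\td\Phi_v(v(q))$, use Lemma \ref{Lem: derivative of Phi_v} to compute $\td\pi(X)$ and $\mathcal{K}(X)$, observe they satisfy $r_0\td\pi(X)+\mathcal{K}(X)=0$ with $r_0=\langle q,v(q)\rangle$, and invoke Proposition \ref{prop: characterisation of focal points}. The only cosmetic difference is that you justify $X\neq 0$ directly from the immersion hypothesis, whereas the paper routes this through Lemma \ref{lem: dphi is injective iff v is nowhere geodesic}; both are valid.
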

\begin{proof}
     Given any $p\in S$, note that $p=\Psi(\Phi_v(p),\langle p,v(p)\rangle)\in \Psi(\Phi_v(S)\times \mathbb{R})$. Let $X=\td\Phi_v(v(p))\in T_{\Phi_v(p)}\Phi_v(S)$, which is non zero by Lemma \ref{lem: dphi is injective iff v is nowhere geodesic}. One quickly shows
     \begin{align*}
     \langle p,v(p)\rangle \td\pi(X)+\mathcal{K}(X)&=0,
     \end{align*}
     by an application of Lemma \ref{Lem: derivative of Phi_v}. Applying Proposition \ref{prop: characterisation of focal points} completes the proof.
\end{proof}
\begin{Mainthm}{1}\label{thm: Characterisations of Isotropic For Tangent Lines}
Suppose $\Phi_v$ is an immersion and $\Phi_v(S)\subset T\S^n$ is immersed. The following are equivalent.
    \begin{enumerate}[label=(\alph*), itemsep=4pt, topsep=4pt]
        \item $\Phi_v(S)$ is isotropic.
        \item Each $p\in S$ belongs to an open set $U\subseteq S$ on which $v|_U=\mbox{grad~}f$ for some smooth function $f:U\to \mathbb{R}$. If $S$ is simply connected we may take $U=S$.
        \item The distribution $v^\bot$ is integrable and $v$ is a geodesic vector field on $S$
    \end{enumerate}
\end{Mainthm}
\begin{proof}
    $(a)\!\implies\!(b)$: Define the $1$-form $ A:p \mapsto \langle ~\cdot~, v(p)\rangle\Big|_{T_pS}~
    $ on $S$. Since $\overline{\nabla}$ is the Levi-Civita connection one quickly shows that for $X,Y\in \Gamma(TS)$,
    \begin{align*}
    \td A(X,Y)&=X(A(Y))-Y(A(X))-A([X,Y])\\
    &=\langle X,\overline{\nabla}_Yv\rangle-\langle Y,\overline{\nabla}_X v\rangle &&\text{(Since $\overline{\nabla}$ is $\mathbb{R}^{n+1}$'s Levi-Civita connection.)}\\
    &=\langle Y,\nabla_Xv\rangle-\langle X,\nabla_Y v\rangle &&\text{(Since $\nabla$ is $\overline{\nabla}$ restricted to $S$)}\\
    &=-\Phi^\ast\Omega(X,Y). &&\text{(By Lemma \ref{Lem: pullback of omega} )}
    \end{align*}
    Thus, since $\Phi$ is an immersion
    \[
    \Omega|_{\Phi_v(S)}=0 \implies \Phi^\ast\Omega=0 \implies \td A=0.
    \]
    The Poincaré Lemma now implies for each $p\in S$ there is an open neighbourhood $U$ and a smooth function $f:U\to \mathbb{R}$ such that with $A|_U=\td f$, i.e. $v|_U=\mbox{grad~}f$.\\
    $(b) \! \implies \! (c)$: First we show $v$ is a geodesic vector field on $S$. Let $p\in S$ and $v|_U=\mbox{grad~}f$ with $U$ and $f:U\to\R$ as above. Then for any $X\in T_pS$,
    \[
    \langle \nabla_{v}v(p),X\rangle=\mbox{Hess}(f)_p(v(p),X)=\mbox{Hess}(f)_p(X,v(p))=\langle \nabla_Xv(p),v(p)\rangle=0,
    \]
    as $v$ is unit. Since $p$ and $X$ were arbitrary, $\nabla_vv=0$. To show
    \[
        v^\bot:p\mapsto \{X\in T_pS~:~\langle X,v(p) \rangle=0\},
    \]
        is integrable, let $c:=f(p)$. Since $||\mbox{grad~}f||=||v||=1$, it follows that $\mbox{rank~}\td f=1$ and $c$ is a regular value of $f$. Hence $f^{-1}(c)$ is an embedded submanifold of $U$ of dimension $\dim S-1$ containing $p$. It is then easily checked that $T_pf^{-1}(c)=v^\bot(p)$.\\
    $(c)\! \implies \! (a)$: Due to the properties of $v$, both $v^\perp$ and $\mbox{Span~}v$ are regular, smooth subbundles of $TS$ and we have the bundle splitting
    \[
    TS=v^\perp\oplus \mbox{Span~}v.
    \]
    Now take any $X,Y\in \Gamma(TS)$ and decompose them via the above:
    \begin{align*}
        &X=\widetilde{X}+\alpha v &&\text{and} &&Y=\widetilde{Y}+\beta v,
    \end{align*}
    with $\widetilde{X},\widetilde{Y}\in \Gamma( v^\perp)$ and $\alpha,\beta$ smooth functions on $S$. It follows that
    \begin{align*}
        \Phi^\ast\Omega(X,Y)=\Phi^\ast\Omega(\widetilde{X},\widetilde{Y})+\alpha\Phi^\ast\Omega(v,\widetilde{Y})+\beta\Phi^\ast\Omega(\widetilde{X},v).
    \end{align*}
     Applying Lemma \ref{Lem: pullback of omega} and the conditions that $v$ is geodesic and unit gives
    \[
    \Phi^\ast\Omega(v,\widetilde{Y})=\langle \nabla_vv,\widetilde{Y}\rangle+\langle v,\nabla_{\widetilde{Y}}v\rangle=0,
    \]
    and similarly $\Phi^\ast\Omega(\widetilde{X},v)=0$. Also since $v^\perp$ is involutive, $[\widetilde{X},\widetilde{Y}]\in v^\perp$, hence
    \begin{align*}
        \Phi^\ast\Omega(\widetilde{X},\widetilde{Y})&=\langle \nabla_{\widetilde{X}}v,\widetilde{Y}\rangle-\langle \nabla_{\widetilde{Y}}v,\widetilde{X}\rangle &&\text{(Lemma \ref{Lem: pullback of omega})}\\
        &=\widetilde{X}(\langle v,\widetilde{Y}\rangle)-\widetilde{Y}(\langle v,\widetilde{X}\rangle)-\langle v,[\widetilde{X},\widetilde{Y}]\rangle &&\text{(Since $\nabla$ is Levi-Civita)}\\
        &=0. &&\text{(Since $\widetilde{X},\widetilde{Y}, [\widetilde{X},\widetilde{Y}]\in v^\perp$)}
    \end{align*}
     It follows that $\Phi^\ast\Omega(X,Y)=0$. Since $X$ and $Y$ were arbitrary, the claim follows.
\end{proof}
\begin{Rem}\label{rem: local dist func}
    The function $f:U\to \mathbb{R}$ in Theorem \ref{thm: Characterisations of Isotropic For Tangent Lines} satisfies $||\mbox{grad~}f||=1$. Such functions are called \textit{local distance functions}. Indeed if $c\in\R$ is such that $f^{-1}(c)\neq \emptyset$, then for all $x\in U$ sufficiently close to $f^{-1}(c)$, $|f(x)-c|$ is the geodesic distance (in $U$) from $x$ to $f^{-1}(c)$ (see \cite{Lee18}, pp.178).
\end{Rem}

\begin{Cor}
    Suppose $S\subset \mathbb{R}^{n+1}$ is a surface (i.e. a two dimensional immersed submanifold) and the pair $(S,v)$ satisfy the conditions of Theorem \ref{thm: Characterisations of Isotropic For Tangent Lines}. $\Phi_v(S)$ is isotropic if and only if $v$ is a geodesic vector field on $S$. If $n=2$, ``isotropic'' may be replaced with ``Lagrangian''.
\end{Cor}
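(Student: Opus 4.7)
The plan is to obtain both claims directly from the equivalence $(a) \iff (c)$ already established in Theorem \ref{thm: Characterisations of Isotropic For Tangent Lines}, exploiting the small dimensions of $S$ and (in the second part) of $T\S^n$. So no fresh symplectic computation is needed; everything reduces to counting dimensions and recognising that one of the two clauses of condition $(c)$ becomes vacuous.

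For the first assertion, I would unpack condition $(c)$ into its two requirements: that the orthogonal distribution $v^\perp \subset TS$ be integrable, and that $v$ be a geodesic vector field on $S$. Because $\dim S = 2$ and $v \in \Gamma(TS)$ is a unit vector field, the rank of $v^\perp$ inside $TS$ is one. Any rank-one smooth distribution is generated locally by a nowhere-vanishing vector field, whose integral curves furnish one-dimensional integral submanifolds; thus $v^\perp$ is automatically involutive and integrable (without appealing to the full strength of Frobenius). Condition $(c)$ therefore collapses to ``$v$ is geodesic on $S$'', and applying Theorem \ref{thm: Characterisations of Isotropic For Tangent Lines} gives the stated equivalence.

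For the Lagrangian refinement when $n=2$, I would argue as follows. Since $\Phi_v$ is assumed to be an immersion (a standing hypothesis of Theorem \ref{thm: Characterisations of Isotropic For Tangent Lines}), we have $\dim \Phi_v(S) = \dim S = 2$. The symplectic manifold $(T\S^2,\Omega)$ has real dimension $2n = 4$, so a Lagrangian submanifold is, by definition, an isotropic submanifold of dimension $\tfrac{1}{2}\dim T\S^2 = 2$. Consequently $\Phi_v(S)$ is isotropic if and only if it is Lagrangian, and the word may be substituted in the statement.

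The corollary is essentially a bookkeeping consequence of Main Theorem 1, so there is no substantial obstacle. The one small point worth flagging carefully in a write-up is that ``$v^\perp$'' in condition $(c)$ refers to the orthogonal complement taken inside $TS$ (not inside $T\R^{n+1}$); this is what ensures rank one in the surface case and hence automatic integrability.
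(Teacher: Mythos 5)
Your proof is correct and follows essentially the same route as the paper's: invoke the equivalence $(a)\iff(c)$ of Main Theorem 1, observe that $v^\perp$ has rank $\dim S - 1 = 1$ and is therefore automatically integrable, and for $n=2$ note that a $2$-dimensional isotropic submanifold of the $4$-dimensional symplectic manifold $T\S^2$ is precisely a Lagrangian one. No gaps.
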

\begin{proof}
By assumption, $v^\perp$ has dimension $\dim S-1=1$ and therefore is integrable.  Now apply $(a)\iff (c)$ of the above theorem. If $n=2$, since $\Phi_v(S)$ is a $2$-dimensional submanifold of $TS^2$, it is isotropic iff it is Lagrangian.
\end{proof}
\section{The Curvatures of Focal Sheets}\label{sec: The Curvature of Focal Sets}
We now explore the geometry of focal sets when they possess a $C^2$-smooth manifold structure and their associated family of lines is isotropic. To this end, we consider two immersed submanifolds of $\R^{n+1}$ equipped with unit tangent vector fields, namely the pairs $(S_i,v_i)$ for $i=1,2$, which satisfy the conditions
\begin{enumerate}
    \item \label{cond: Phi1=Phi2} $\Phi_{v_1}(S_1)=\Phi_{v_2}(S_2)$,
    \item \label{cond: Phi is diffeo} $\Phi_{v_i}:S_i\to \Phi_{v_i}(S_i)$ is a diffeomorphism,
     \item \label{cond: Phi isotropic} $\Phi_{v_i}(S_i)$ is isotropic.
\end{enumerate}
Condition (\ref{cond: Phi1=Phi2}) ensures that both pairs $(S_i,v_i)$ generate the same family of oriented lines, namely $\Phi_{v_i}(S_i)$, for either $i=1$ or $2$. Since the vector field $v_i$ is tangent to $S_i$, Proposition \ref{prop: submanfolds tangent to the lines are focal} shows both $S_1$ and $S_2$ are contained in the focal set of $\Phi_{v_i}(S_i)$. Condition (\ref{cond: Phi is diffeo}) ensures the submanifolds $S_i$ can be related to one another through their common image under $\Phi_{v_i}$. Indeed $S_1$ is diffeomorphic to $S_2$ under the map $\Phi_{v_2}^{-1}\circ \Phi_{v_1}:S_1\to S_2$, and if $p_i\in S_i$ are related by $\Phi_{v_1}(p_1)=\Phi_{v_2}(p_2)$ then
\begin{align}\label{equ: equate images of Phi's}
    &v_1(p_1)=v_2(p_2), &&\text{and} &&p_1-\langle p_1,v_1(p_1)\rangle=p_2-\langle p_2,v_2(p_2)\rangle,
\end{align}
by definition of $\Phi_{v_i}$. Therefore we may write
\[
p_1-p_2=\langle p_1-p_2,v_1(p_1)\rangle v_1(p_1),
\]
with $\langle p_1-p_2,v_1(p_1)\rangle$ being the signed distance in $\mathbb{R}^{n+1}$ between $S_1$ and $S_2$ along the line $v_1(p_1)$. Motivated by this we define the signed distance function
\begin{equation}\label{equ: distance function}
\begin{aligned}
\mu_{12}:~ &S_1\to \mathbb{R}\\
&p\longmapsto  \left\langle p- (\Phi_{v_2}^{-1} \circ \Phi_{v_1})(p),v_1(p)\right\rangle.
\end{aligned}
\end{equation}
from $S_1$ to $S_2$. It follows that $(\Phi_{v_2}^{-1}\circ\Phi_{v_1})(p)=p-\mu_{12} (p)v(p)$, so $\mu_{12}(p)\neq 0$ iff $p$ and $(\Phi_{v_2}^{-1}\circ\Phi_{v_1})(p)$ are distinct focal points on the line $\Phi_{v_1}(p)$.
\begin{Prop}\label{prop: E_2 is eigen value of shape op on S1}
    Let $\nabla$ be the connection on $S_1$ induced by $\overline{\nabla}$ and define
    \begin{align*}
        &E_2:=V_2-\langle V_2,v_1\rangle v_1 &&\text{where} &&V_2:=\td(\Phi^{-1}_{v_1}\circ\Phi_{v_2})(v_2).
    \end{align*}
    Suppose $\mu_{12}$ is nowhere $0$. Then $E_2$ is non-zero and satisfies 
    \begin{equation}\label{equ: nabla_E_2 v_1}
    \nabla_{E_2}v_1=\mu_{12}^{-1}E_2.
\end{equation}
Furthermore, if $p\in \mathcal{M}$ with $\mathcal{M}$ an integrable manifold of $v_1^\perp$, then $E_2(p)\in T_p\mathcal{M}$ is a principal direction of $\mathcal{M}$ with principal curvature $1/\mu_{12}(p)$.
\end{Prop}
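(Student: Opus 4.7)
My plan is to exploit the explicit formula
\[
\widetilde{\Psi}(p):=(\Phi_{v_2}^{-1}\circ\Phi_{v_1})(p)=p-\mu_{12}(p)\,v_1(p),
\]
which follows directly from the definition of $\mu_{12}$ and equation (\ref{equ: equate images of Phi's}), together with the resulting identity $v_2\circ\widetilde{\Psi}=v_1$ of $\mathbb{R}^{n+1}$-valued maps on $S_1$. Since $V_2$ is characterised by $\td\widetilde{\Psi}(V_2)=v_1$, differentiating the displayed formula along a curve in $S_1$ with initial velocity $V_2$ yields the ambient identity
\[
V_2-\td\mu_{12}(V_2)\,v_1-\mu_{12}\,\overline{\nabla}_{V_2}v_1=v_1\qquad\text{in }\mathbb{R}^{n+1}.
\]

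Next I would decompose this equation in two stages. First, its tangential projection onto $T_pS_1$ replaces $\overline{\nabla}_{V_2}v_1$ by $\nabla_{V_2}v_1$, since $V_2,v_1\in T_pS_1$ and $\nabla$ is the induced Levi-Civita connection. Second, pairing the tangential equation with $v_1$ and using $|v_1|=1$ to annihilate $\langle\nabla_{V_2}v_1,v_1\rangle$ gives $\td\mu_{12}(V_2)=\langle V_2,v_1\rangle-1$; substituting this back produces
\[
\mu_{12}\,\nabla_{V_2}v_1=V_2-\langle V_2,v_1\rangle v_1=E_2.
\]
Writing $V_2=E_2+\langle V_2,v_1\rangle v_1$ and invoking Main Theorem \ref{mainthm1}$(c)$ to obtain $\nabla_{v_1}v_1=0$ (available because $\Phi_{v_1}(S_1)$ is isotropic by hypothesis), the $v_1$-component of $V_2$ contributes nothing to $\nabla_{V_2}v_1$, giving the main identity $\nabla_{E_2}v_1=\mu_{12}^{-1}E_2$.

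Non-vanishing of $E_2$ is then a short contradiction: $E_2=0$ would force $V_2=\lambda v_1$ for $\lambda=\langle V_2,v_1\rangle$, and the ambient identity collapses to $\lambda\mu_{12}\,\overline{\nabla}_{v_1}v_1=0$. Since $\mu_{12}\neq 0$ by hypothesis, either $\lambda=0$, giving $V_2=0$ and contradicting that $\widetilde{\Psi}$ is a diffeomorphism, or $\overline{\nabla}_{v_1}v_1=0$, which contradicts $\Phi_{v_1}$ being an immersion by Lemma \ref{lem: dphi is injective iff v is nowhere geodesic}. Finally, the principal-direction statement is then immediate: $E_2\perp v_1$ by construction, so $E_2(p)\in v_1^{\perp}(p)=T_p\mathcal{M}$; since $v_1$ restricts to a unit normal of $\mathcal{M}$ within $S_1$, the map $X\mapsto\nabla_X v_1$ coincides (up to sign) with the shape operator of $\mathcal{M}$ in $(S_1,\nabla)$, and the identity $\nabla_{E_2}v_1=\mu_{12}^{-1}E_2$ exhibits $E_2$ as a principal vector with principal curvature $1/\mu_{12}$. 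The main delicacy in the whole argument is the clean collapse $\nabla_{V_2}v_1=\nabla_{E_2}v_1$, which hinges on $\nabla_{v_1}v_1=0$ supplied by isotropy; once this is in place, everything reduces to routine tangential/normal bookkeeping.
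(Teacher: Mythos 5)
Your proof is correct, and it takes a somewhat more elementary route than the paper's. Both arguments hinge on establishing the key identity $E_2=\mu_{12}\nabla_{V_2}v_1$, then decompose $V_2=E_2+\langle V_2,v_1\rangle v_1$ and discard the $v_1$-component using $\nabla_{v_1}v_1=0$ (supplied by Main Theorem~\ref{mainthm1} via isotropy). The difference lies in how $E_2=\mu_{12}\nabla_{V_2}v_1$ is reached: the paper equates $\td\Phi_{v_1}(V_2)=\td\Phi_{v_2}(v_2)$, splits both sides via $\td\pi$ and $\mathcal{K}$ using Lemma~\ref{Lem: derivative of Phi_v}, and then eliminates terms with the relations~(\ref{equ: equate images of Phi's}); you instead bypass the $T\S^n$ splitting machinery entirely by differentiating the ambient formula $\widetilde{\Psi}(p)=p-\mu_{12}(p)v_1(p)$ along $V_2$ and using $\td\widetilde{\Psi}(V_2)=v_1$ (as $\R^{n+1}$-valued maps, via~(\ref{equ: equate images of Phi's})). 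Your derivation is shorter and conceptually cleaner for a reader who does not want to re-open the horizontal/vertical bookkeeping; the paper's derivation has the small advantage of recording the intermediate identity $\overline\nabla_{V_2}v_1=\overline\nabla_{v_2}v_2$, which it then reuses to get non-vanishing in one line, namely $E_2=\mu_{12}\overline\nabla_{v_2}v_2\neq 0$ by Lemma~\ref{lem: dphi is injective iff v is nowhere geodesic}. Your non-vanishing argument (by contradiction: $E_2=0$ forces $V_2=\lambda v_1$, and the ambient identity then yields $\lambda\mu_{12}\overline\nabla_{v_1}v_1=0$, with either branch contradicting a diffeomorphism hypothesis) is correct but longer. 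The principal-direction step is identical to the paper's. One minor wording caveat: your phrase ``the tangential projection replaces $\overline\nabla_{V_2}v_1$ by $\nabla_{V_2}v_1$'' understates the content — the ambient identity already forces $\overline\nabla_{V_2}v_1$ to be tangent to $S_1$ (its normal part must vanish since all other terms are tangent and $\mu_{12}\neq 0$), which is the same observation the paper makes.
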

\begin{proof}
    Let $p_i\in S_i$ satisfy $\Phi_{v_1}(p_1)=\Phi_{v_2}(p_2)$. Also $\td\Phi_{v_1}|_{p_1}(V_2)=\td\Phi_{v_2}|_{p_2}(v_2)$ holds by definition of $V_2$. After projecting to $T\S^n$ via $\td\pi$ and $\mathcal{K}$, Lemma \ref{Lem: derivative of Phi_v} gives\\
\begin{equation}
\hspace{-1.5em}
\label{equ: equate images of d_Phi's}
\left \{\begin{aligned}
&\overline\nabla_{V_2}v_1(p_1)=\overline\nabla_{v_2}v_2(p_2)\\
&V_2(p_1\!)\!-\!\langle V_2(p_1\!),\!v_1\!(p_1\!)\rangle v_1\!(p_1\!)\!-\!\langle p_1,\!v_1\!(p_1\!)\rangle \overline{\nabla}_{V_2}v_1\!(p_1\!)\!=\!-\langle p_2,\!v_2(p_2\!)\rangle \overline{\nabla}_{v_2}v_2(p_2\!).
\end{aligned}\right.
\end{equation}
Furthermore by combining the these equations with equations (\ref{equ: equate images of Phi's}), one derives that
\[V_2(p_1)=\langle V_2(p_1),v_1(p_1)\rangle v_1(p_1)+\langle p_1-p_2, v_1(p_1) \rangle\overline{\nabla}_{V_2}v_1(p_1),
\]
which implies that $\overline{\nabla}_{V_2}v_1(p_1)\in T_{p_1}S_1$ and thus $\overline{\nabla}_{V_2}v_1(p_1)=\nabla_{V_2}v_1(p_1)$. The above decomposes $V_2$ into its components orthogonal and parallel to $v_1$;
\begin{equation}\label{eq: V2 in decomposition}
    V_2=\langle V_2, v_1\rangle v_1+\mu_{12} \nabla_{V_2}v_1.
\end{equation}
    Then,
    \begin{align*}
        E_2&=V_2-\langle V_2, v_1\rangle v_1 &&\text{(By definition of $E_2$)}\\
        &=\mu_{12} \nabla_{V_2}v_1 &&\text{(Identity (\ref{eq: V2 in decomposition}))}\\
        &=\mu_{12}(\langle V_2, v_1\rangle \nabla_{v_1}v_1+\nabla_{E_2}v_1) &&\text{(By definition of $E_2$)}\\
        &=\mu_{12} \nabla_{ E_2} v_1. &&\text{(Condition (\ref{cond: Phi isotropic}) and Main Theorem \ref{thm: Characterisations of Isotropic For Tangent Lines})} 
    \end{align*}
As $\mu_{12}\neq 0$, equation (\ref{equ: nabla_E_2 v_1}) follows. Also, by the above calculation and equations (\ref{equ: equate images of d_Phi's})
\[
E_2=\mu_{12}\nabla_{V_2}v_1=\mu_{12}\overline{\nabla}_{v_2}v_2.
\]
Since $\Phi_{v_2}$ is a diffeomorphism, $\overline\nabla_{v_2}v_2\neq 0$ by Lemma \ref{lem: dphi is injective iff v is nowhere geodesic}, thus $E_2\neq 0$. Finally, if $\mathcal{M}$ is an integral manifold of $v_1^\bot$ (which exists by Main Theorem \ref{thm: Characterisations of Isotropic For Tangent Lines}), then $\mathcal{M}$ is a hypersurface in $S_1$ with unit normal $v_1|_\mathcal{M}$ and shape operator $X\mapsto \nabla_X v_1$. If $p\in \mathcal{M}$, then $E_2(p)\in T_p\mathcal{M}$ as $E_2\perp v_1$.  Evaluating equation (\ref{equ: nabla_E_2 v_1}) at $p$ completes the proof.
\end{proof}
\begin{Thm} \label{Thm: sectional curvature from mu}
    Suppose $\mu_{12}$ is nowhere $0$. The sectional curvature $K$ of $S_1$ satisfies
    \[
    K(V_2,v_1)=\frac{\td\mu_{12}(v_1)-1}{\mu_{12}^2}.
    \]
\end{Thm}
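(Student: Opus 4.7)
The plan is to reduce the sectional curvature of the $(V_2,v_1)$-plane to that of the $(E_2,v_1)$-plane and then exploit the fact, established in Proposition \ref{prop: E_2 is eigen value of shape op on S1}, that $E_2$ is an eigendirection of the shape operator of the equidistant hypersurfaces orthogonal to $v_1$. Concretely, because $V_2 - E_2 = \langle V_2,v_1\rangle v_1$ is a multiple of $v_1$, the planes $\mathrm{Span}\{V_2,v_1\}$ and $\mathrm{Span}\{E_2,v_1\}$ coincide, so it suffices to compute $K(E_2,v_1)$. Since $v_1$ is unit and $E_2\perp v_1$, this reduces to evaluating $\langle R(E_2,v_1)v_1,E_2\rangle/\|E_2\|^2$, where $R$ is the curvature tensor of $\nabla$.

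The next step is to derive a Riccati-type identity for the $(1,1)$-tensor $S:X\mapsto \nabla_X v_1$ on $S_1$. Because $v_1$ is a geodesic vector field (Main Theorem \ref{thm: Characterisations of Isotropic For Tangent Lines}), $\nabla_{v_1}v_1=0$, and expanding
\[
R(X,v_1)v_1 = \nabla_X\nabla_{v_1}v_1 - \nabla_{v_1}\nabla_X v_1 - \nabla_{[X,v_1]}v_1
\]
together with torsion-freeness yields the standard relation $R(X,v_1)v_1=-(\nabla_{v_1}S)(X)-S^2(X)$. I will apply this to $X=E_2$, using $SE_2 = \mu_{12}^{-1}E_2$ from Proposition \ref{prop: E_2 is eigen value of shape op on S1}. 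Two supporting facts are needed: first, $\nabla_{v_1}E_2\in v_1^\perp$, which follows from $\langle E_2,v_1\rangle=0$ and $\nabla_{v_1}v_1=0$; second, $S$ is symmetric on $v_1^\perp$, which follows from Lemma \ref{Lem: pullback of omega} and the isotropy assumption (indeed, $\langle SX,Y\rangle-\langle X,SY\rangle=\Phi_{v_1}^\ast\Omega(X,Y)=0$).

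Expanding $(\nabla_{v_1}S)(E_2)=\nabla_{v_1}(SE_2)-S(\nabla_{v_1}E_2)$ and taking the inner product with $E_2$, the terms involving $\langle\nabla_{v_1}E_2,E_2\rangle$ cancel by the symmetry of $S$, leaving
\[
\langle(\nabla_{v_1}S)(E_2),E_2\rangle = v_1\!\left(\mu_{12}^{-1}\right)\|E_2\|^2,
\]
while $\langle S^2E_2,E_2\rangle=\mu_{12}^{-2}\|E_2\|^2$. Substituting back gives
\[
K(E_2,v_1)=\frac{\langle R(E_2,v_1)v_1,E_2\rangle}{\|E_2\|^2} = -v_1(\mu_{12}^{-1})-\mu_{12}^{-2}=\frac{\td\mu_{12}(v_1)-1}{\mu_{12}^2},
\]
which equals $K(V_2,v_1)$ as noted, completing the proof.

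The main obstacle is the bookkeeping around $\nabla_{v_1}E_2$, since a priori this could have a component along $v_1$ that would spoil the cancellation; verifying that it lies in $v_1^\perp$ and that $S|_{v_1^\perp}$ is self-adjoint (which is the point where isotropy enters decisively, beyond its use via Proposition \ref{prop: E_2 is eigen value of shape op on S1}) is the delicate part. Everything else is a direct application of the Riccati identity and substitution.
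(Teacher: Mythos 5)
Your proof is correct, and the argument is sound in every step. The reduction $K(V_2,v_1)=K(E_2,v_1)$, the use of $\nabla_{v_1}v_1=0$ and of Proposition \ref{prop: E_2 is eigen value of shape op on S1}, the observation that isotropy (via Lemma \ref{Lem: pullback of omega}) makes $S:X\mapsto\nabla_Xv_1$ self-adjoint, and the verification that $\nabla_{v_1}E_2\perp v_1$ are all exactly the ingredients needed, and your bookkeeping with $\|E_2\|^2$ is consistent throughout.

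The difference from the paper's proof is one of packaging rather than substance. The paper expands $R(E_2,v_1)v_1$ directly, computing the two terms $\langle\nabla_{v_1}\nabla_{E_2}v_1,E_2\rangle$ and $\langle\nabla_{[E_2,v_1]}v_1,E_2\rangle$ separately; for the second term it invokes Lemma \ref{Lem: pullback of omega} in the specific form $\langle\nabla_{[E_2,v_1]}v_1,E_2\rangle=\langle\nabla_{E_2}v_1,[E_2,v_1]\rangle$, and the quantity $\langle\nabla_{v_1}E_2,E_2\rangle$ appears in both terms with opposite signs and cancels. You instead first derive the Riccati identity $R(X,v_1)v_1=-(\nabla_{v_1}S)(X)-S^2(X)$, valid whenever $v_1$ is geodesic, and then evaluate it on the eigenvector $E_2$ of $S$, using the self-adjointness of $S$ (a direct reformulation of Lemma \ref{Lem: pullback of omega} plus isotropy) to produce the same cancellation. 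Your version is more conceptual: it exposes the argument as the standard Riccati comparison for the second fundamental form of the leaves of $v_1^\perp$ along the normal geodesic flow, and it makes clear at a glance why the term $\langle\nabla_{v_1}E_2,E_2\rangle$ must drop out (self-adjointness, not an algebraic accident). The paper's version is more self-contained, requiring no prior familiarity with the Riccati formalism. Both rely on exactly the same three inputs — $v_1$ geodesic, the eigenvalue relation (\ref{equ: nabla_E_2 v_1}), and isotropy — applied in the same order.
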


\begin{proof}
         Note that since $v_1\wedge V_2$ and $v_1 \wedge E_2$ define the same plane, we have
         \[
         K(V_2,v_1)=K(E_2,v_1).
         \]
         Furthermore, after rescaling if necessary we may assume $E_2$ is unit - in which case equation (\ref{equ: nabla_E_2 v_1}) continues to hold. It follows that $\{E_2,v_1\}$ are orthonormal and
         \begin{align*}
         K(E_2,v_1)&=\langle R(E_2,v_1)v_1,E_2\rangle &&\text{(Where $R(\cdot,\cdot,\cdot))$ is the Riemann tensor)}\\
         &=-\Big(\langle \nabla_{v_1}\nabla_{E_2}v_1,E_2\rangle+\langle\nabla_{[E_2,v_1]}v_1,E_2\rangle\Big). && \text{(Since $\nabla_{v_1}v_1=0$ by Main Theorem \ref{thm: Characterisations of Isotropic For Tangent Lines})}
         \end{align*}
            Calculating these two terms gives
         \begin{align*}
             \langle \nabla_{v_1}\nabla_{E_2}v_1,E_2\rangle&=\langle \nabla_{v_1}(\mu_{12}^{-1}E_2),E_2\rangle &&\text{(By Equation \ref{equ: nabla_E_2 v_1})}\\
             &=\langle 
             -\mu_{12}^{-2}\td \mu_{12}(v_1)E_2+\mu_{12}^{-1}\nabla_{v_1}E_2,E_2\rangle\\
             &=-\mu_{12}^{-2}\td \mu_{12}(v_1)+\mu_{12}^{-1}\langle\nabla_{v_1}E_2,E_2\rangle.\\ \\
             \langle\nabla_{[E_2,v_1]}v_1,E_2\rangle&=\langle \nabla_{E_2}v_1,[E_2,v_1]\rangle &&\text{(By Lemma \ref{Lem: pullback of omega}, since $\Omega|_{\Phi_{v_1}(S_1)}=0$)}\\
             &=\langle \nabla_{E_2}v_1,\nabla_{E_2}v_1\rangle
-\langle \nabla_{E_2}v_1,\nabla_{v_1}E_2\rangle &&\text{($\nabla$ is torsion free)}\\
&=\mu_{12}^{-2}-\mu_{12}^{-1}\langle E_2, \nabla _{v_1}E_2\rangle. &&\text{(By Equation (\ref{equ: nabla_E_2 v_1}))}
         \end{align*}
         Insertion of the above into the formula for $K(E_2,v_1)$ yields
         \begin{align*}
         K(E_2,v_1)
=\frac{\td \mu_{12}(v_1)-1}{\mu_{12}^2}.
         \end{align*}
    \end{proof}
If more of the focal set is known, further control of the geometry of $S_1$ is uncovered.
\begin{Thm} \label{Thm: curvature control from multiple focal set componants}
   Let $(S_1,v_1),\ldots,(S_k,v_k)$ satisfy pairwise conditions $(\ref{cond: Phi1=Phi2})$ - $(\ref{cond: Phi isotropic})$, with $\dim S_i=k$. Define $\mu_{ij}:S_i\to \mathbb{R}$, the signed distance from $S_i$ to $S_j$ by
\begin{equation}\label{eq: all the distance functions}
\mu_{ij}(q)=\left\langle q- (\Phi_{v_j}^{-1} \circ \Phi_{v_i})(q),v_i(q)\right\rangle, \qquad i,j\in\{1,\ldots,k\}.
\end{equation}
Assume $\mu_{ij}$ is nowhere $0$ for $i\neq j$ and define $E_{ij},V_{ij}\in\Gamma(TS_i)$ by
\begin{align*}
        &E_{ij}:=V_{ij}-\langle V_{ij},v_i\rangle v_i &&\text{where} &&V_{ij}:=\td(\Phi^{-1}_{v_i}\circ\Phi_{v_j})(v_j).
    \end{align*}
\begin{enumerate}[label=\alph*)]
    \item Let $\mathcal{M}\subset S_1$ be an integrable manifold of $v_1^\perp$ and $p\in \mathcal{M}$. Then $\{E_{1j}(p)\}^k_{j=2}$ are principal directions of $\mathcal{M}$ at $p$ with distinct principal values $\{1/\mu_{1j}(p)\}_{j=2}^{k}$.
    \item The set of vector fields $\{E_{1j}\}^k_{j=2}$ are an orthogonal frame of $v_1^\perp$ and the Ricci curvature tensor of $S_1$ satisfies
    \[
    \mbox{Ric}(v_1,v_1)=\sum_{j=2}^{k}K(v_1,E_{1j})=\sum_{j=2}^{k}\frac{\td \mu_{1j}(v_1)-1}{(\mu_{1j})^2}.
    \] 
\end{enumerate}
\end{Thm}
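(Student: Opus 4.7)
The plan is to reduce this multi-submanifold statement to the pairwise results already established, namely Proposition \ref{prop: E_2 is eigen value of shape op on S1} and Theorem \ref{Thm: sectional curvature from mu}, applied to each pair $(S_1, v_1), (S_j, v_j)$ for $j = 2, \dots, k$. The structural role of the integrable manifold $\mathcal{M} \subset S_1$ of $v_1^\perp$ (whose existence is guaranteed by Main Theorem \ref{mainthm1} applied to $(S_1, v_1)$) is to provide a Riemannian hypersurface of $S_1$ whose shape operator $X \mapsto \nabla_X v_1|_{\mathcal{M}}$ carries all the information needed.

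For part (a), I would apply Proposition \ref{prop: E_2 is eigen value of shape op on S1} separately to each pair $(S_1, v_1), (S_j, v_j)$. Each such application yields that $E_{1j}(p) \in T_p\mathcal{M}$ is a nonzero principal direction of $\mathcal{M}$ at $p$ with principal value $1/\mu_{1j}(p)$. The distinctness of these $k-1$ principal values is the geometric assertion that the points $(\Phi_{v_j}^{-1} \circ \Phi_{v_1})(p)$ lie at pairwise distinct signed distances along the common line $\Phi_{v_1}(p)$, which is implicit in the hypothesis that $S_2, \dots, S_k$ are distinct focal sheets meeting each line at distinct points.

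For part (b), since $\dim \mathcal{M} = \dim S_1 - 1 = k - 1$ and the shape operator of $\mathcal{M}$ is self-adjoint with respect to the induced metric, eigenvectors corresponding to distinct eigenvalues are orthogonal. Combined with part (a), this gives that $\{E_{1j}(p)\}_{j=2}^k$ is a collection of $k-1$ pairwise orthogonal nonzero vectors in the $(k-1)$-dimensional space $v_1^\perp(p) = T_p \mathcal{M}$, hence an orthogonal frame of $v_1^\perp$. Normalising to $\hat{E}_{1j} := E_{1j}/\|E_{1j}\|$ and using the standard trace definition of Ricci with respect to the orthonormal frame $\{v_1, \hat{E}_{12}, \dots, \hat{E}_{1k}\}$ of $T_p S_1$, one has
\[
\mathrm{Ric}(v_1, v_1) = \sum_{j=2}^{k} \langle R(\hat{E}_{1j}, v_1) v_1, \hat{E}_{1j} \rangle = \sum_{j=2}^{k} K(v_1, E_{1j}),
\]
using that sectional curvature is independent of the scaling of its arguments. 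Since $V_{1j}$ and $E_{1j}$ differ only by a $v_1$-component, they span the same plane with $v_1$, so $K(v_1, E_{1j}) = K(v_1, V_{1j})$. Theorem \ref{Thm: sectional curvature from mu} applied to $(S_1, v_1)$ and each $(S_j, v_j)$ evaluates this as $(\mathrm{d}\mu_{1j}(v_1) - 1)/\mu_{1j}^2$, and summing yields the claim.

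The main obstacle is the pairwise-distinctness of the principal values: without it, the $E_{1j}$ are individually principal directions but need not be mutually orthogonal, and one would have to build an orthogonal frame of $v_1^\perp$ by a different route before assembling the Ricci sum. Once distinctness is granted, everything reduces to a clean chaining of the pairwise results and the spectral theorem for the shape operator of $\mathcal{M}$.
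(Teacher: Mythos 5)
Your approach is essentially the same as the paper's: reduce part (a) to Proposition~\ref{prop: E_2 is eigen value of shape op on S1} applied pairwise, use the distinctness of eigenvalues to obtain orthogonality from the spectral theorem for the (self-adjoint) shape operator of $\mathcal{M}$, and sum the pairwise sectional curvatures from Theorem~\ref{Thm: sectional curvature from mu} to get the Ricci formula. Your observation that $K(v_1,E_{1j})=K(v_1,V_{1j})$ and the use of the orthonormal frame $\{v_1,\hat{E}_{12},\ldots,\hat{E}_{1k}\}$ for the Ricci trace also match what the paper does implicitly.

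The one place where you gesture rather than prove is the pairwise distinctness of the values $\mu_{1j}(p)$ for $j=2,\ldots,k$. You correctly identify this as the crux but attribute it to the ``geometric assertion that the points $(\Phi_{v_j}^{-1}\circ\Phi_{v_1})(p)$ lie at pairwise distinct signed distances along the common line,'' describing this as ``implicit in the hypothesis.'' The theorem's stated hypothesis is only that $\mu_{ij}$ is nowhere zero for $i\ne j$; it does not literally say the $\mu_{1j}(p)$ are pairwise distinct. The paper closes this gap by a short computation: with $q_i:=\Phi_{v_i}^{-1}(\Phi_{v_1}(q))$ and $q_j:=\Phi_{v_j}^{-1}(\Phi_{v_1}(q))$, the line directions agree ($v_1(q)=v_i(q_i)=v_j(q_j)$), and unwinding the definitions in equation~(\ref{eq: all the distance functions}) yields the telescoping identity
\[
\mu_{1j}(q)-\mu_{1i}(q)=\mu_{ij}(q_i),
\]
so $\mu_{1j}(q)=\mu_{1i}(q)$ would force $\mu_{ij}(q_i)=0$, contradicting the hypothesis. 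You should supply this identity explicitly; without it the orthogonality of the $E_{1j}$ (and hence that they form a frame, and hence the Ricci sum) does not follow. Once that one-line derivation is in place, the rest of your argument is correct and complete.
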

\begin{proof}
(a): Let $q\in S_1$, first we show that $\mu_{1j}(q)\neq \mu_{1i}(q)$ when $i\neq j$ Define $q_j=\Phi_{v_j}^{-1}\circ\Phi_{v_1}(q)\in S_{j}$ and $q_i=\Phi_{v_i}^{-1}\circ\Phi_{v_1}(q)\in S_i$ so that $\Phi_{v_i}(q_i)=\Phi_{v_j}(q_j)=\Phi_{v}(q)$. Then $v_1(q)=v_j(q_j)=v_i(q_i)$ and from the definitions of $\mu_{1j},\mu_{1i}$ and $\mu_{ij}$ one finds
\[
\mu_{1j}(q)-\mu_{1i}(q)=\mu_{ij}(q_i).
\]
Hence $\mu_{1j}(q)\neq\mu_{1i}(q)$, otherwise $\mu_{ij}(q_i)=0$, contradicting $0\notin\mu_{ij}(S_i)$. Let $\mathcal{M}$ and $p\in \mathcal{M}$ be as above. By definition $E_{1j}(p)\in v_1^\perp(p)=T_p\mathcal{M}$. Applying Proposition \ref{prop: E_2 is eigen value of shape op on S1} to the pair $(S_1,v_1),(S_j,v_j)$ in place of  $(S_1,v_1),(S_2,v_2)$ proves (a).\\ \hfill

(b) The orthogonality follows since $\{E_{1j}\}_{j=2}^k$ are (pointwise) the principal directions of $\mathcal{M}$ with distinct principal values. Since there are $k-1$ of them, they span $v^\perp$. The Ricci curvature expression follows directly from Theorem \ref{Thm: sectional curvature from mu}.
\end{proof}
\section{The Hypersurface case}\label{sec: the focal sets of hypersurfaces}
Given a hypersurface $\Sigma$, the results from the previous section are now applied to investigate the curvature of its focal sheets. A classical result in this direction is
\begin{Manualthm}{A}[Bianchi, 1874] \label{Bianchi thm}\cite{B1874}
Let $\Sigma\subset\mathbb{R}^3$ be a $C^3$-smooth, oriented, ridge free and strictly convex surface. Suppose the radii of curvature of $\Sigma$ satisfy $r_2-r_1 \equiv s$ for $s\neq0$ a constant. Then the two focal sheets of $\Sigma$ have Gaussian curvature $-1/s^2$.
\end{Manualthm}
Some discussion of the above terminology is warranted. In general focal sheets are singular, see \cite{BT23} for recent work on this. At points where the radii of curvature $r_i:\Sigma\to\R$ have multiplicity $\geq1$, the functions $r_i$ are not in general differentiable and the focal sheets are singular.
\begin{Def}
    Given a hypersurface $\Sigma$ the functions
\begin{align*}
        s_{ij}:~&\Sigma \to \R\\
        &p \to r_i(p)-r_j(p),
\end{align*}
with $i,j\in\{1,\ldots,n\}$ are called the \textit{astigmatisms} of $\Sigma$.
\end{Def}
The multiplicities of all radii of curvature at a point $p\in \Sigma$ are $1$ iff $s_{ij}(p)\neq 0$ for all $i,j\in\{1,\ldots,n\}$, $i\neq j$. Hence the requirement $s\neq0$ in Bianchi's theorem.
\begin{Def} Let $\Sigma$ be a hypersurface with a unit normal field $\nu$ and $r_i:\Sigma\to \R$ a radii of curvature of $\Sigma$. A point $q\in\Sigma$ is called a \textit{ridge point associated to} $r_i$ \cite{Port94} if
    \begin{align}\label{eq: ridge condition}
        &r_i(q)\overline{\nabla}_X \nu=X &&\text{and} &&\td r_i(X)=0,
    \end{align}
    for some non-zero $X\in T_q\Sigma$. $\Sigma$ is called \textit{ridge free} if it has no ridge points.
\end{Def}
The ridge free condition in Bianchi's thoerem ensures the maps $\mathcal{Y}_i$ are immersions, or in other words, that the surface is locally diffeomorphic to its focal sheets.
\begin{Lem}\label{lem: dY_i is injective iff no ridge}
    Let $\Sigma\subset\R^{n+1}$ be a $C^3$-smooth hypersurface. Let $\nu$ be a unit normal vector field and $r_i:\Sigma\to\R$ a radii of curvature with multiplicity $1$. If $X\in T_q\Sigma$ then
    \begin{equation}
        \td\mathcal{Y}_i(X)=X-\td r_i(X)\nu(q)-r_i(q)\overline{\nabla}_X\nu(q).
    \end{equation}
    In particular, $\mathcal{Y}_i$ is an immersion at $q$ iff $q$ is not a ridge point.
\end{Lem}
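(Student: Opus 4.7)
The formula for $\td\mathcal{Y}_i$ is a direct computation from the definition $\mathcal{Y}_i(q) = q - r_i(q)\nu(q)$. I would pick a curve $\alpha:(-\varepsilon,\varepsilon)\to\Sigma$ with $\alpha(0)=q$ and $\alpha'(0)=X$, write
\[
(\mathcal{Y}_i\circ\alpha)(t) = \alpha(t) - r_i(\alpha(t))\,\nu(\alpha(t)),
\]
and differentiate at $t=0$. The product rule yields the three claimed terms: $\alpha'(0)=X$, the derivative of $r_i\circ\alpha$ contributes $-\td r_i(X)\nu(q)$, and the derivative of $\nu\circ\alpha$ (computed via $\overline{\nabla}$, since we are viewing $\nu$ as a $T\R^{n+1}$-valued function on $\Sigma$) contributes $-r_i(q)\overline{\nabla}_X\nu(q)$. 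The multiplicity-$1$ hypothesis is what guarantees $r_i$ is differentiable at $q$, so that $\td r_i(X)$ makes sense.

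\textbf{The equivalence with the ridge condition.} For the second assertion I would analyze $\ker\td\mathcal{Y}_i|_q$ by projecting onto the orthogonal decomposition $T_q\R^{n+1}=T_q\Sigma \oplus \mathrm{Span}\{\nu(q)\}$. Since $\langle\nu,\nu\rangle\equiv1$, differentiating gives $\langle\overline{\nabla}_X\nu(q),\nu(q)\rangle=0$, so $\overline{\nabla}_X\nu(q)\in T_q\Sigma$; also $X\in T_q\Sigma$ by assumption. Hence the equation $\td\mathcal{Y}_i(X)=0$ splits as
\begin{align*}
\text{normal part:}&\quad -\td r_i(X) = 0,\\
\text{tangential part:}&\quad X - r_i(q)\overline{\nabla}_X\nu(q) = 0.
\end{align*}
These two equations are exactly the ridge condition (\ref{eq: ridge condition}). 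Thus a nonzero $X\in\ker\td\mathcal{Y}_i|_q$ exists iff $q$ is a ridge point associated to $r_i$, proving that $\mathcal{Y}_i$ is an immersion at $q$ iff $q$ is not a ridge point.

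\textbf{Expected obstacle.} There is no serious analytic obstacle here; the argument is essentially bookkeeping. The only subtle point to be careful about is justifying the splitting above, which requires noting that $\overline{\nabla}_X\nu$ is tangent to $\Sigma$ (a consequence of $\nu$ being unit), and confirming that the multiplicity-$1$ assumption on $r_i$ provides the regularity needed to make $\td r_i$ a well-defined linear functional on $T_q\Sigma$.
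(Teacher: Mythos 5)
Your proof is correct and takes essentially the same approach as the paper: differentiate the definition $\mathcal{Y}_i(q)=q-r_i(q)\nu(q)$ along a curve, then use the orthogonality of $\nu(q)$ with $X$ and $\overline{\nabla}_X\nu(q)$ to split $\td\mathcal{Y}_i(X)=0$ into normal and tangential components, which are exactly the two equations of the ridge condition (\ref{eq: ridge condition}). The paper states this splitting more tersely, but the underlying argument is identical, and your explicit observation that $\overline{\nabla}_X\nu\in T_q\Sigma$ (from $|\nu|\equiv 1$) is precisely the orthogonality fact the paper invokes.
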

\begin{proof}
    Since $\Sigma$ is $C^3$-smooth and has no umbilic points, the radii of curvature function  $r_i:\Sigma\to \R$ is differentiable. The expression for $\td\mathcal{Y}_i$ now easily follows from equation (\ref{eq: maps from surface to focal set}). It is also easy to show, due to the orthogonality of $\nu(q)$ with $X$ and $\overline{\nabla}_X\nu(q)$, that $\td\mathcal{Y}_i|_q$ has non-zero kernel iff $q$ is a ridge point.
    \end{proof}
Even with the ridge free condition, a focal sheet may self intersect. Therefore injectivity conditions on the maps $\mathcal{Y}_i$ are natural to ensure a manifold structure. Since the curvature properties of focal sets are discussed, both conditions are assumed - i.e. that the maps $\mathcal{Y}_i$ are diffeomorphisms onto their images.
\begin{Prop}\label{prop: orthogonal hypersurface gives focal sets hypersurface folliation}
    Let $\Sigma\subset \R^{n+1}$ be a $C^3$-smooth, strictly convex hypersurface with unit normal $\nu$ whose radii of curvature $(r_i)_{i=1}^n$ have multiplicity $1$. Let $S_i:=\mathcal{Y}_i(\Sigma)$ be a focal sheet with $\mathcal{Y}_i$ a diffeomorphism. Define $X_i\in\Gamma(T\Sigma)$ by
    \begin{align*}
       &r_i\overline{\nabla}_{X_i}\nu =X_i &&\text{and} &&\td r_i(X_i)=1,
    \end{align*}
    so $X_i$ is a principal vector field. Then $v_i:=-\td\mathcal{Y}_i(X_i)$ is a unit and geodesic vector field over $S_i$ and $v_i^\perp$ is integrable. Also $\Phi_{v_i}\circ\mathcal{Y}_i=\Phi_\nu$, $v_i\circ\mathcal{Y}_i=\nu$ and $\Phi_{v_i}$ is an immersion.
\end{Prop}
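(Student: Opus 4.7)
The plan is to reduce almost all claims to the single computation $\td\mathcal{Y}_i(X_i) = -\nu$, after which the assertions follow from the established machinery (Lemmas \ref{Lem: derivative of Phi_v}, \ref{lem: dY_i is injective iff no ridge}, Proposition \ref{prop: orthogonal implies isotropic}, and Main Theorem \ref{mainthm1}).

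First I would apply Lemma \ref{lem: dY_i is injective iff no ridge} at $q\in\Sigma$ with the vector $X_i$: since $\td r_i(X_i)=1$ and $r_i\,\overline{\nabla}_{X_i}\nu = X_i$, the three contributions collapse as
\begin{equation*}
\td\mathcal{Y}_i(X_i) \;=\; X_i - \td r_i(X_i)\,\nu(q) - r_i(q)\,\overline{\nabla}_{X_i}\nu(q) \;=\; X_i - \nu(q) - X_i \;=\; -\nu(q).
\end{equation*}
Writing $p:=\mathcal{Y}_i(q)$, this immediately yields $v_i(p)=\nu(q)$, hence $v_i\circ\mathcal{Y}_i=\nu$ and $|v_i|=1$. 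Since $\mathcal{Y}_i$ is a diffeomorphism, the identity also shows $v_i(p)\in T_pS_i$, so $v_i$ is indeed a tangent section over $S_i$.

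Next I would verify $\Phi_{v_i}\circ\mathcal{Y}_i=\Phi_\nu$ by direct substitution: using $p=q-r_i(q)\nu(q)$ and $v_i(p)=\nu(q)$, one computes $\langle p,v_i(p)\rangle = \langle q,\nu(q)\rangle - r_i(q)$, so
\begin{equation*}
p - \langle p,v_i(p)\rangle\,v_i(p) \;=\; q - \langle q,\nu(q)\rangle\,\nu(q),
\end{equation*}
matching the definition of $\Phi_\nu(q)$. To prove $\Phi_{v_i}$ is an immersion, note that since $\nu$ is normal to $\Sigma$, the last claim of Lemma \ref{Lem: derivative of Phi_v} gives that $\Phi_\nu$ is an immersion; combined with $\Phi_{v_i}\circ \mathcal{Y}_i = \Phi_\nu$ and the fact that $\mathcal{Y}_i$ is a diffeomorphism, injectivity of $\td\Phi_{v_i}$ follows.

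It remains to show that $v_i$ is geodesic on $S_i$ and $v_i^\perp$ is integrable. Here the idea is to invoke Main Theorem \ref{mainthm1}, whose condition $(a)$ implies $(c)$. For this I would use the identity $\Phi_{v_i}(S_i)=\Phi_\nu(\Sigma)$ (a consequence of $\Phi_{v_i}\circ\mathcal{Y}_i = \Phi_\nu$ and surjectivity of $\mathcal{Y}_i$ onto $S_i$) together with Proposition \ref{prop: orthogonal implies isotropic} applied to the pair $(\Sigma,\nu)$, which tells us $\Phi_\nu(\Sigma)$ is isotropic. Thus $\Phi_{v_i}(S_i)$ is isotropic, and $(a)\Rightarrow (c)$ of Main Theorem \ref{mainthm1} delivers the remaining two assertions.

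The main subtlety I anticipate is not conceptual but bookkeeping: one must distinguish the induced connection $\nabla$ on $S_i$ (with respect to which $v_i$ is geodesic) from the ambient connection $\overline{\nabla}$ (for which $\overline{\nabla}_{v_i}v_i\neq 0$, as required by Lemma \ref{lem: dphi is injective iff v is nowhere geodesic} for $\Phi_{v_i}$ to be an immersion), and one should also verify that $\Phi_\nu(\Sigma)$ qualifies as an immersed submanifold of $T\S^n$ so that Proposition \ref{prop: orthogonal implies isotropic} applies --- this comes from $\Phi_\nu$ being an immersion together with the injectivity implicit in $\mathcal{Y}_i$ being a diffeomorphism via $\Phi_\nu = \Phi_{v_i}\circ\mathcal{Y}_i$.
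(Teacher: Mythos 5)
Your proposal is correct and follows essentially the same route as the paper's own proof: both compute $\td\mathcal{Y}_i(X_i)=-\nu$ via Lemma \ref{lem: dY_i is injective iff no ridge} to obtain $v_i\circ\mathcal{Y}_i=\nu$ and $\Phi_{v_i}\circ\mathcal{Y}_i=\Phi_\nu$, deduce the immersion property from $\Phi_{v_i}=\Phi_\nu\circ\mathcal{Y}_i^{-1}$, obtain isotropy from Proposition \ref{prop: orthogonal implies isotropic} applied to $(\Sigma,\nu)$, and then invoke Main Theorem \ref{mainthm1}, $(a)\Rightarrow(c)$, for the geodesic and integrability claims. Your closing remark about distinguishing $\nabla$ from $\overline{\nabla}$ and checking that $\Phi_\nu(\Sigma)$ is an immersed submanifold is a careful addition that the paper leaves implicit.
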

\begin{proof}
    We first note $X_i$ is well defined since the radii of curvature have multiplicity $1$ and $\Sigma$ is ridge free. Also $v_i$ is well defined.
    The above claim is simply part $(c)$ of Theorem \ref{thm: Characterisations of Isotropic For Tangent Lines} applied to $(S_i,v_i)$. We thus only need to show that part $(a)$ holds (i.e. $\Phi_{v_i}(S)$ is isotropic), and that $v_i$ satisfies Theorem \ref{thm: Characterisations of Isotropic For Tangent Lines}'s prerequisites, that is, $v_i$ is a unit vector field and $\Phi_{v_i}$ is an immersion. One checks using Lemma \ref{lem: dY_i is injective iff no ridge} that if $q\in\Sigma$,
    \[
    (v_i\circ\mathcal{Y}_i)(q)=-\td\mathcal{Y}_i{\big|_q}(X_i)=-X_i(q)+\td r_i|_q(X_i)\nu(q)+r_i(q)\overline{\nabla}_{X_i}\nu(q)=\nu(q),
    \]
    by definition of $X_i$. Hence $v_i\circ\mathcal{Y}_i=\nu$ and so $v_i$ is unit. If $p\in S_i$ and $q\in \Sigma$ satisfy $p=\mathcal{Y}_i(q)$, then
    \begin{align*}
    \Phi_{v_i}(p)&=(v_i(p),p-\langle p,v_i(p)\rangle v_i(p)), &&\text{(By definition of $\Phi_{v_i}$)}\\
    &=(\nu(q), \mathcal{Y}_i(q)-\langle\mathcal{Y}_i(q),\nu(q)\rangle \nu(q)), &&\text{(By definition of $p$ and $v_i\circ \mathcal{Y}_i=\nu$)}\\
    &=(\nu(q),q-\langle q, \nu(q) \rangle), &&\text{(By definition of $\mathcal{Y}_i$)}\\
    &=\Phi_\nu(q).
    \end{align*}
    Hence $\Phi_{v_i}\circ \mathcal{Y}_i=\Phi_\nu$ and so $\Phi_{v_i}(S_i)=\Phi_\nu(\Sigma)$. However, $\Sigma$ is an orthogonal submanifold of $\Phi_\nu(\Sigma)$, thus $\Phi_\nu(\Sigma)$, and hence $\Phi_{v_i}(S_i)$, is isotropic. Also by the above $\Phi_{v_i}=\Phi_\nu\circ\mathcal{Y}_i^{-1}$. Thus $\Phi_{v_i}$ is an immersion as $\Phi_\nu$ is (by Lemma \ref{Lem: derivative of Phi_v}).
\end{proof}
Together, Theorem \ref{Thm: sectional curvature from mu} and Proposition \ref{prop: orthogonal hypersurface gives focal sets hypersurface folliation} allow us to relate together the geometry of different focal sheets of a fixed hypersurface as follows.
\begin{Mainthm}{2}\label{thm: sectional curvature of hypersurface focal set}
Let $\Sigma\subset \R^{n+1}$ be a $C^3$-smooth, strictly convex hypersurface with unit normal field $\nu$ whose radii of curvature $(r_i)_{i=1}^n$ have multiplicity $1$. For $i=1,2$ let $S_i:=\mathcal{Y}_i(\Sigma)$ be focal sheets with $\mathcal{Y}_i$ a diffeomorphism. If $X_i\in\Gamma(T\Sigma)$ satisfy
    \begin{align*}
       &r_i\overline{\nabla}_{X_i}\nu =X_i &&\text{and} &&\td r_i(X_i)=1,
    \end{align*}
      The sectional curvature of $S_1$, denoted $K_1$, satisfies
    \begin{equation}\label{equ: sectional curvature of focal set}
    (\mathcal{Y}_1^\ast K_1)\big(X_1,X_2)=\frac{\td s_{12}(X_1)-1}{(s_{12})^2}.
    \end{equation}
\end{Mainthm}
\begin{proof}
    Applying Proposition \ref{prop: orthogonal hypersurface gives focal sets hypersurface folliation} to each focal set $S_i:=\mathcal{Y}(\Sigma)$ for $i=1,2$ gives a pair $(S_i,v_i)$ where $v_i\in\Gamma(TS_i)$ is a unit vector field defined by $v_i:=-\td\mathcal{Y}_i(X_i)$ and satisfying $\Phi_{v_i}\circ \mathcal{Y}_i=\Phi_\nu$, $v_i\circ\mathcal{Y}_i=\nu$ and with each $\Phi_{v_i}$ an immersion. We first show that this pair satisfy the conditions introduced at the start of Section \ref{sec: The Curvature of Focal Sets};
    \begin{enumerate}
    \item $\Phi_{v_1}(S_1)=\Phi_{v_2}(S_2)$.
    \item $\Phi_{v_i}:S_i\to \Phi_{v_i}(S_i)$ is a diffeomorphism.
     \item $\Phi_{v_i}(S_i)$ is isotropic.
\end{enumerate}
Begin by noting that since $\Phi_{v_i}\circ \mathcal{Y}_i=\Phi_\nu$ we have $\Phi_{v_1}\circ \mathcal{Y}_1=\Phi_{v_2}\circ\mathcal{Y}_2$. Hence $\Phi_{v_1}(S_1)=\Phi_{v_2}(S_2)$, which is (\ref{cond: Phi1=Phi2}). For (\ref{cond: Phi is diffeo}) we remark that as curvature is a local notion, to derive equation (\ref{equ: sectional curvature of focal set}) it suffices to derive
\[
K_1\big(\td\mathcal{Y}_1(X_1),\td\mathcal{Y}_1(X_2))=\frac{\td s_{12}(X_1)-1}{(s_{12})^2},
\]
on some open neighbourhood about any point of $S_1$. Hence by working locally we may assume that $\Phi_{v_1}$ is a diffeomorphism once its domain is suitably restricted. Since $\Phi_{v_2}=\Phi_{v_1}\circ \mathcal{Y}_1\circ\mathcal{Y}_2^{-1}$, the same domain can be pulled back under $\mathcal{Y}_1\circ\mathcal{Y}_2^{-1}$ on which $\Phi_{v_2}$ is a diffeomorphism, thus (\ref{cond: Phi is diffeo}). Condition (\ref{cond: Phi isotropic}) follows from (\ref{cond: Phi1=Phi2}) since $\Phi_\nu(\Sigma)$ is isotropic. To calculate the signed distance between $S_1$ and $S_2$ (Cf. equation (\ref{equ: distance function})) let $p\in S_1$ and $q\in\Sigma$ be related by $p=\mathcal{Y}_1(q)$. Then
\begin{align*}
\mu_{12}(p)&:=\langle p-(\Phi_{v_2}^{-1}\circ \Phi_{v_1})(p),v_1(p)\rangle,\\
&=\langle \mathcal{Y}_1(q)-(\Phi_{v_2}^{-1}\circ \Phi_{v_1}\circ\mathcal{Y}_1)(q),\nu(q)\rangle, &&\text{(Using $v_1\circ\mathcal{Y}_1=\nu$)}\\
&=\langle \mathcal{Y}_1(q)-\mathcal{Y}_2(q),\nu(q)\rangle, &&\text{(Using $\Phi_{v_2}^{-1}\circ\Phi_{v_1}=\mathcal{Y}_2\circ\mathcal{Y}_1^{-1}$)}\\
&=r_1(q)-r_2(q) &&\text{(Using the definition of $\mathcal{Y}_i$)}\\
&=-s_{12}(q).
\end{align*}
Hence $\mu_{12}\circ\mathcal{Y}_1=-s_{12}$. Since $\Sigma$ has radii of curvatures which have multiplicity $1$, $s_{12}$, and thus $\mu_{12}$, are nowhere $0$. Hence we apply Theorem \ref{Thm: sectional curvature from mu} to find
\[
    K(V_2,v_1)=\frac{\td\mu_{12}(v_1)-1}{\mu^2},
\]
where $V_2=\td(\Phi^{-1}_{v_1}\circ\Phi_{v_2})(v_2)$. However, 
\[
v_2=\td(\Phi^{-1}_{v_1}\circ\Phi_{v_2})(v_2)=\td(\mathcal{Y}_1\circ\mathcal{Y}^{-1}_2)(v_2)=-\td \mathcal{Y}_1(X_2),
\]
by using the definition of $v_2$ and the derived relationships between $\Phi_{v_i}$ and $\mathcal{Y}_i$. Using the above together with $v_1=-\td\mathcal{Y}_1(X_1)$ and $\mu_{12}\circ\mathcal{Y}_1=-s_{12}$, lets one write the above sectional curvature formula as equation (\ref{equ: sectional curvature of focal set}).
\end{proof}
\begin{Mainthm}{3}\label{Thm: focal set of hypersurf}
    Let $\Sigma\subset \R^{n+1}$ be a $C^3$-smooth, strictly convex hypersurface with unit normal field $\nu$ whose radii of curvature $(r_i)_{i=1}^n$ have multiplicity $1$. Let $S_j:=\mathcal{Y}_j(\Sigma)$ be focal sheets with $\mathcal{Y}_j$ a diffeomorphism for all $j\in\{1,\ldots,n\}$. If $X_j\in\Gamma(T\Sigma)$ satisfy
    \begin{align*}
       &r_j\overline{\nabla}_{X_j}\nu =X_j &&\text{and} &&\td r_j(X_j)=1,
    \end{align*}
    the Ricci curvature of $S_i$ ($i\in\{1,\ldots,n\}$) satisfies
    \[
    (\mathcal{Y}_i^\ast\mathrm{Ric})(X_i,X_i)=\sum_{\substack{j=1\\j\neq i}}^n\frac{X_i(s_{ij})-1}{(s_{ij})^2},
    \]
\end{Mainthm}
\begin{proof}
    Fix $i$ as in the theorem statement. We proceed along similar lines as those in Theorem \ref{thm: sectional curvature of hypersurface focal set}, now applying Theorem \ref{Thm: curvature control from multiple focal set componants}, instead of Theorem \ref{Thm: sectional curvature from mu}. For each $j\in \{1,\ldots,n\}$, set $v_j:=-\td\mathcal{Y}_j(X_j)$. Then $(S_1,v_1),\ldots,(S_n,v_n)$ satisfy the prerequisites of Theorem \ref{Thm: curvature control from multiple focal set componants}, that is $v_j\in\Gamma(TS_j)$ are unit and conditions (\ref{cond: Phi1=Phi2})-(\ref{cond: Phi isotropic}) are satisfied. Furthermore $\mu_{ij}\circ\mathcal{Y}_i=-s_{ij}$ and so $\mu_{ij}$ is nowhere $0$. Hence by Theorem \ref{Thm: curvature control from multiple focal set componants} (with $(S_i,v_i)$ in place of $(S_1,v_1)$) gives
    \[
    \mbox{Ric}(v_i,v_i)=\sum_{\substack{j=1\\j\neq i}}^{n}\frac{\td \mu_{ij}(v_i)-1}{(\mu_{ij})^2}.
    \]
    Inserting $v_i=-\td \mathcal{Y}_i(X_i)$ and $\mu_{ij}\circ\mathcal{Y}_i=-s_{ij}$ in the above completes the proof.
\end{proof}
\begin{Rem}\label{rem: Thm 2 with arb codim}
    We remark that a similar results holds when $\Sigma$ has codimension greater than $1$, with minor tweaking of the above proofs. In particular there is no preferred normal unit vector field $\nu\in\Gamma(N\Sigma)$ and the vector fields $X_i\in\Gamma(T\Sigma)$ and functions $r_i:\Sigma\to \R$ are now just solutions of the following eigenproblem on $\Sigma$
    \begin{align*}
        &r_i\overline{\nabla}_{X_i} \nu=X_i &&\text{and} &&X_i(r_i)=1.
    \end{align*}
    We note that solutions exist, if $\Sigma$ is assumed to satisfy the ridge free condition (which is trivially generalised to higher codimension) and $s_{ij}:=r_i-r_j$ is assumed non vanishing, so that the multiplicities of the above eigenvalue problem are $1$.
\end{Rem}

\end{document}